\theoremstyle{plain}
\newtheorem{theorem}{Theorem}[section]
\newtheorem{proposition}[theorem]{Proposition}
\newtheorem{corollary}[theorem]{Corollary}
\newtheorem{lemma}[theorem]{Lemma}
\newtheorem{remark}[theorem]{Remark}
\newtheorem{definition}[theorem]{Definition}
\newtheorem{notation}[theorem]{Notation}
\newtheorem*{theorem*}{Theorem}
\newtheorem*{proposition*}{Proposition}
\newtheorem*{corollary*}{Corollary}
\newtheorem*{lemma*}{Lemma}
\theoremstyle{definition}
\newtheorem*{definition*}{Definition}
\newtheorem{theoremintro}{Theorem}
\newtheorem*{notation*}{Notation}
\newcommand{\CC}{\mathbb{C}}
\newcommand{\PP}{\mathbb{P}}
\newcommand{\ZZ}{\mathbb{Z}}
\newcommand{\NN}{\mathbb{N}}
\newcommand{\OO}{\mathcal{O}}
\newcommand{\FF}{\mathcal{F}}
\newcommand{\EE}{\mathcal{E}}
\newcommand{\LL}{\mathcal{L}}
\newcommand{\KK}{\mathcal{K}}
\newcommand{\HH}{\mathcal{H}}
\newcommand{\II}{\mathscr{I}}
\newcommand{\JJ}{\mathscr{J}}
\newcommand{\pp}{\mathfrak{p}}
\DeclareMathAlphabet{\mathpzc}{OT1}{pzc}{m}{it}
\newcommand{\codim}{\mathrm{codim}}
\newcommand{\ann}{\mathrm{ann}}
\newcommand{\sing}{\mathrm{Sing}}
\newcommand{\per}{\mathpzc{Per}(\omega)}
\newcommand{\kup}{\mathpzc{Kup}(\omega)}
\newcommand{\wt}{\widetilde}
\title{The singular locus of $q$-logarithmic foliations}
\author{Ariel Molinuevo \thanks{The author was fully supported by Universidade Federal do Rio de Janeiro, Brazil.} and Federico Quallbrunn \thanks{The author was fully supported by CONICET, Argentina.}}
\newenvironment{acknowledgement}
{\section*{Acknowledgments}}
{}
\begin{document}
\maketitle

\begin{abstract}
We determine the structure of the singular locus of generic \linebreak codimension-$q$ logarithmic foliations and its relation with the unfoldings of these foliations. We calculate the ideal defining the scheme of persistent singularities.
\end{abstract}

\

{\footnotesize
\noindent
Keywords: $q$-logarithmic foliations, projective varieties, singular locus.
}

\

{\footnotesize
\noindent
Mathematics Subject Classification (2020): 14A15, 32M25.
}


\

\section{Introduction}

The study of holomorphic foliations on algebraic varieties has been an active area of research in mathematics for several decades. We refer to the book \cite{scardua2021} for a review of fundamental results and open problems in the subject. Singularities of foliations are of particular interest as they provide valuable insight into the dynamics and geometry of the foliation and how these change under deformation.

Starting from the seminal work of Calvo-Andrade in \cite{omegar} until recent years, there has been significant progress in the study of foliations defined by logarithmic forms. In \cite{omegar}, Calvo-Andrade proves that the set of codimension $1$ foliations defined by logarithmic $1$-forms in a compact holomorphic variety $X$ with $H^1(X,\CC)=0$ is stable under deformation. In the case where $X=\PP^n_\CC$ this result implies that foliations defined by forms of the type
\[
	\omega= \left(\prod_{j=1}^r F_j\right) \sum_{i=1}^r \lambda_i\frac{dF_i}{F_i},\quad \lambda_j\in\CC,\ F_j\in \CC[x_0,\dots,x_n],
\]
 $F_j$ homogeneous, $\deg F_j=d_j$, define an irreducible component $\mathcal{L}(d_1,\dots,d_r)$ of the moduli space of foliations of codimension $1$ in $\PP^n$.
Recently, in \cite{fjc}, a new proof of this result is given using deformation theory.
In recent years, some progress has also been made in the classification of codimension $q$ foliations. In particular, Cerveau and Lins-Neto in \cite{celnlog} have proven that foliations defined by logarithmic $q$-forms of the type
\[
	\omega= \left(\prod_{j=1}^r F_j\right) \left[\sum_{\substack{I\subset \{1,\dots,r\}  \\ |I|=q}} \lambda_I \left(\bigwedge_{i\in I}\frac{dF_i}{F_i}\right)\right],\quad \lambda_I\in\CC,\ F_j\in\CC[x_0,\dots, x_n],
\]
define irreducible components of the moduli space of codimension $q$ foliations on $\PP^n$,  and Gargiulo in \cite{javi2020} gives an independent proof of the case $q=2$ using deformation theory.

\bigskip

This article deals with foliations defined by logarithmic forms such as the ones above. Instead of focusing on the stability of such forms under deformation, we deal with their singular locus and their unfoldings, generalizing the main result of
 \cite{fmi} to the case of arbitrary codimension, and applying this result to the characterization of the unfoldings of these foliations, as we will explain now.

\bigskip

Unfoldings of singular foliations, and particularly \emph{first order unfoldings} (see \cref{defUnfolding} below), were introduced independently by Suwa and Mattei (see \cite{suwa-unfoldings_complex, mattei91} and the references within). Although the original definition of Suwa was stated in full generality, the focus of these earlier articles was on germs of codimension-1 foliations around a singularity.

In later years, the work of Suwa has been expanded in several articles, see \cite{mmq,mmq2} and the references within. In this paper we will apply the results from \cite{mmq2} to study the singular locus of a $q$-logarithmic foliation. That paper is a generalization of \cite{mmq}. Since we will refer to both articles several times, we will briefly explain what they are about here.

In \cite{mmq} we studied first order unfoldings and we defined the \emph{Kupka scheme} of a codimension-1 foliation on $\PP^n$.

A codimension-1 foliation in $\PP^n$ can be given by a global twisted differential form $\omega\in H^0(\PP^n,\Omega^1_{\PP^n}(e))$, for some $e\geq 2$ defining a line bundle, such that $\omega$ verifies the \emph{Frobenius integrability condition}:
\[
 \omega\wedge d\omega = 0\ ,
\]
see \cref{folcodim1}.

A first order unfolding of a foliation defined by $\omega$ is a particular type of first order deformation of $\omega$. While a (first order) deformation of $\omega$ is given by a family of differential forms $\omega_\varepsilon$ parameterized by an infinitesimal parameter $\varepsilon$ such that $\omega_\varepsilon$ is integrable for every fixed value of $\varepsilon$ and coincides with $\omega$ in the origin, a (first order) unfolding $\widetilde{\omega}_\varepsilon$ is given by a foliation in an infinitesimal neighbourhood of $\PP^n$ such that it restricts to $\omega$ in the central fiber.

\

In the codimension $1$ case, first order unfoldings define a sheaf isomorphic to a sheaf of ideals, which is called the \emph{unfolding ideal}, $I(\omega)$, see \cref{remI}. If we define as $J(\omega)$ the ideal defined by the coefficients of $\omega$, we have that the variety defined by $J(\omega)$ is the singular locus of $\omega$, $\sing(\omega)$, see \cref{singw}. It can be also easily seen that $J(\omega)\subset I(\omega)$. So, the first order unfoldings define a subvariety of $\sing(\omega)$. We refer to \cite{moli} for details on this topic.

In the literature, see \cite{kupka}, \cite[Fundamental Lemma, p.~406]{deMedTesis} or \cite[Definition 2.1]{gmln}, the \emph{Kupka set} of a codimension-1 foliation $\omega$ is the set of points such that $\omega(p)=0$ and $d\omega(p)\neq 0$. To work in an algebraic setting first we needed to associate a scheme structure to that set. The way to do that was to define the Kupka scheme as the schematic support of $d\omega$ inside $\sing(\omega)$.

For a more formal definition, let us fix here $S= \CC[x_0,\ldots,x_n]$ the homogeneous coordinate ring of $\PP^n$. We defined the Kupka scheme, $\kup$, see \cite[Definition 4.1]{mmq}, as $\kup=Proj(S/K(\omega))$ where $K(\omega)$ is the homogeneous ideal defined as
\[
K(\omega) = ann(\overline{d\omega})+J(\omega), \ \ \overline{d\omega}\in \Omega^2_S\otimes S/J(\omega)\ .
\]

We also stated the following relation between the ideal of first order unfoldings and the Kupka ideal, see \cite[Theorem 4.12]{mmq} for a complete statement, for $\omega$ a \emph{generic} codimension-1 foliation in $\PP^n$ we have:
\begin{equation}\label{teo0.1}
 \sqrt{I(\omega)} = \sqrt{K(\omega)}\ .
\end{equation}

\

Despite the fact that a first order unfolding of a codimension-1 foliation identifies with an ideal, in higher codimension there was no analog to that fact. In \cite{mmq2} we defined what we called the \emph{subscheme of persistent singularities}, $\per$, addressing that particular situation, where $\omega$ defines a codimension-$q$ foliation, see \cref{codq} for a proper definition. We refer to \cref{defII} for the formal definition of $\per$, but for now we can state an equivalent definition, see \cref{prop}, as:
\[
 \pp\in\per\iff \pp\text{ is a singular point of every unfolding of }\omega\ .
\]

We also gave a definition of the Kupka scheme for a codimension-$q$ foliation, see \cref{defKK}.

\

In \cite[Theorem 3]{fmi} the authors show that the singular locus of a generic codimension-1 logarithmic foliation in $\PP^n$ is given by a codimension 2 variety (which is the Kupka variety) and a set of isolated points. In this paper we extend that result showing that the singular locus of a \emph{generic} codimension-$q$ logarithmic foliation, see \cref{genericity} for the generic conditions, in a smooth variety $X$ is given by two varieties $\kup$ and $\HH$ where $\kup$ has codimension $(q+1)$ and $\HH$ has dimension $(q-1)$ or $\HH=\emptyset$, see \cref{teo2}:
\begin{theoremintro}
Let $\omega\in H^0(X,\Omega^q_X\otimes \LL)$ be a generic, locally decomposable, $q$-logarithmic foliation. Then we can decompose its singular locus $\sing(\omega)$ as the disjoint union of
\[
 \sing(\omega) = \kup\sqcup \HH,
\]
where $\kup$ is the Kupka variety of $\varpi$ of codim $q+1$ and $\HH$ is a variety of dimension $q-1$ or $\HH=\emptyset$. Even more so, we have that $\kup$ coincides with the scheme of persistent singularities $\per$.
\end{theoremintro}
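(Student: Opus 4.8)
The plan is to work locally in an affine chart of $X$ and reduce the global statement to a computation with the explicit logarithmic form $\varpi = \left(\prod_{j=1}^r F_j\right)\left[\sum_{|I|=q}\lambda_I\bigwedge_{i\in I}\frac{dF_i}{F_i}\right]$, using the genericity hypotheses on the $F_j$ (and on the $\lambda_I$) to control how the divisors $\{F_j=0\}$ meet. First I would stratify the singular locus according to how many of the $F_j$ vanish at a given point $\pp$. If $\pp$ lies on exactly one component, say $F_1(\pp)=0$ and $F_j(\pp)\neq 0$ for $j\neq 1$, then near $\pp$ the form $\varpi$ is, up to a nonvanishing factor, of Kupka type: one checks that $d\varpi(\pp)\neq 0$, so $\pp$ belongs to the Kupka scheme, and the transversality coming from genericity forces $\kup$ to have codimension $q+1$ (it is cut out inside each $\{F_i=0\}\cap\dots$ type stratum by the expected number of equations). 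If $\pp$ lies on two or more components, the residues along those components impose extra conditions; genericity of the $\lambda_I$ and of the intersections of the $F_j$ will show that the locus where $\varpi$ and $d\varpi$ both vanish drops to dimension $q-1$ — this is the variety $\HH$ — and that $\HH$ is disjoint from $\kup$ by construction (on $\HH$ we have $d\varpi=0$, on $\kup$ we have $d\varpi\neq 0$). So $\sing(\varpi)=\kup\sqcup\HH$ follows once one verifies that no singular point escapes this dichotomy, which is exactly a statement about the vanishing of the coefficients of $\varpi$ versus those of $d\varpi$.

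The second half, that $\kup$ coincides with the scheme of persistent singularities $\per$, I would prove by a double inclusion at the level of schemes. The inclusion $\kup\subseteq\per$ should follow from the characterization recalled in the excerpt ($\pp\in\per$ iff $\pp$ is singular for every unfolding of $\varpi$) together with the standard local normal form of a foliation near a Kupka point: near such a point the foliation is, up to analytic change of coordinates and the nonvanishing factor, a pullback of a foliation on a lower-dimensional space with an isolated-type singularity, and this normal form is rigid enough that any unfolding must keep $\pp$ singular. For the reverse inclusion $\per\subseteq\kup$, I would argue contrapositively: if $\pp\in\sing(\varpi)$ but $\pp\notin\kup$, then by the first part $\pp\in\HH$, i.e. $d\varpi(\pp)=0$, and I would exhibit an explicit unfolding — perturbing one of the $\lambda_I$, or adding a term that moves the intersection of the $F_j$'s — that is smooth at $\pp$, showing $\pp\notin\per$. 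One has to do this at the scheme-theoretic level, comparing the ideal $K(\varpi)$ defining $\kup$ with the ideal $\II(\varpi)$ defining $\per$ from \cref{defII}, and check the nilpotents match; genericity should make both ideals radical along the relevant stratum, so an equality of the underlying sets upgrades to an equality of schemes.

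The main obstacle I expect is the bookkeeping in the genericity argument for $\HH$: showing that on the multiple-intersection strata the system $\{\varpi=0,\ d\varpi=0\}$ cuts out precisely a $(q-1)$-dimensional set (and not something bigger for special but still "generic-looking" choices of the $\lambda_I$). This requires carefully writing out the residues of $\varpi$ and $d\varpi$ along $\{F_i=0\}\cap\{F_j=0\}$ and using that the $\lambda_I$ satisfy the explicit open conditions of \cref{genericity}; degenerate configurations of the $\lambda_I$ (for instance when many of them vanish) have to be excluded by those hypotheses. A secondary difficulty is the local-to-global passage: patching the affine computations into a statement about $\Omega^q_X\otimes\LL$ on the possibly-nontrivial variety $X$, which I would handle by noting that Kupka-ness, the dimension count, and the unfolding criterion are all local and $\LL$-twist-invariant, so the decomposition and the identification with $\per$ glue.
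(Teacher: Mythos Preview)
Your stratification of $\sing(\omega)$ is inverted, and this is not a cosmetic issue: it breaks both halves of the argument. A point lying on exactly one component $D_1=\{F_1=0\}$ is \emph{not} singular at all (generically): at such a point every monomial $F_{\widehat I}\,dF_I$ with $1\notin I$ vanishes, and what remains is $dF_1\wedge(\text{nonzero }(q{-}1)\text{-form})$. The Kupka locus is the union of the $(q{+}1)$-fold intersections $\bigcap_{i\in K}D_i$, $|K|=q{+}1$ --- that is where every $F_{\widehat I}$ vanishes, so $\omega=0$, while a direct computation gives $d\omega\neq 0$ when the residues are pairwise distinct. Conversely $\HH$ is \emph{not} the multi-component stratum; it is the zero locus $Z(\varpi)$ of the logarithmic form itself, which typically sits away from $D$. (Note also that $\varpi$ is closed, so ``$d\varpi=0$'' is not a condition; the relevant object is $d\omega=dF\wedge\varpi$.)

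Because of this, your dimension count for $\HH$ does not go through as written. The paper obtains $\dim Z(\varpi)\le q-1$ by induction on $q$ via the residue map $R:\Omega^q_X(\log D)\to\Omega^{q-1}_{\widetilde D}(\log D')$: ampleness forces $Z(\varpi)$ to meet $D$, and $Z(\varpi)\cap D\subset Z(R(\varpi))$ has dimension $\le q-2$ by induction (the base case $q=1$ being \cite{fmi}). Your ``residues along two components impose extra conditions'' gestures at this but is aimed at the wrong stratum.

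For $\per\subseteq\kup$ there is a second gap. Perturbing a coefficient $\lambda_I$ is a \emph{deformation}, not an unfolding, so it does not witness $\pp\notin\per$. The paper uses genuine unfoldings of the shape $f_j\mapsto f_j+\varepsilon g$; outside $D$ the key input is Saito's generalized de~Rham lemma (available because $\codim\sing(\omega)\ge 3$ once the dimension bound on $\HH$ is in place), and on $D$ one again inducts through the residue. Your idea of ``moving the intersection of the $F_j$'' is the right unfolding, but without the de~Rham lemma you cannot conclude $1\in\II_\pp$ at points of $\HH\setminus D$. The inclusion $\kup\subseteq\per$ you sketch is closer to the paper's argument, though the rigidity you invoke has to be made scheme-theoretic via an explicit computation of $hd\varpi_{i,k}\bmod \KK$ at a strictly Kupka point, using $\lambda_I\neq\lambda_J$; the reducedness of $\kup$ then upgrades this to an equality of schemes, as you anticipate.
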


\

This article is organized as follows:

\

In \cref{codq} we give the definitions of first order unfolding, of persistent singularities and of the Kupka scheme all of them for a codimension-$q$ foliation on a smooth variety $X$.

\

In \cref{poles} we recall from \cite{deligne} the definition of logarithmic differential form and of the residue of such form.

\

Finally in \cref{main} we define $q$-logarithmic foliations and we use the relation between Kupka singularities and persistent singularities to establish in \cref{teo2} the dimension of the irreducible components of the singular set of a generic logarithmic foliation on $X$, as well as determining the equality between the schemes of Kupka singularities and persistent singularities.

Then in \cref{corofinal} we use the result stated above to compute the ideal of persistent singularities.

\

These results are of particular interest as not much is known in general about the singular set of a higher codimension foliation, so logarithmic foliations represent an interesting first example where the dimension of the irreducible components of the singular set as well as the scheme of persistent singularities are defined.

\

\begin{acknowledgement}
We would like express our gratitude to Alicia Dickenstein and Jorge Vitorio Pereira for their valuable suggestions and contributions.
\end{acknowledgement}

\section{Codimension-$q$ foliations}\label{codq}

Along this section we give the definition of \emph{codimension-$q$ foliation} on a smooth variety $X$. Then we recall the definition of unfolding of a codimension-$q$ foliation on a variety $X$. Finally, we review the definitions of persistent singularities and of Kupka singularities for codimension-$q$ foliations, see Definition \ref{defII} and Definition \ref{defKK}, respectively, and we state some general facts regarding the ideal of persistent singularities and the Kupka scheme.
We also add remarks recalling the analogous definitions for a codimension-1 foliation in $\PP^n$ for the sake of clarity.

We refer the reader to \cite{mmq2} for a complete treatment of the subject.

\bigskip

Let us fix a smooth variety as $X$. If $\Xi \in \Gamma(U, \bigwedge^p TX)$ is a multivector and $\varpi\in \Gamma(U,\Omega^q_X)$ a $q$-form we will denote by $i_\Xi \varpi \in \Gamma(U,\Omega^{q-p}_X)$ the contraction.
Recall from \cite[Proposition 1.2.1]{deMed1} that the \emph{Pl\"ucker relations} for $\varpi$ are given by
\[
 i_\Xi \varpi\wedge \varpi=0
\]
for any $\Xi\in\bigwedge^{q-1} TX$.
This is equivalent to: if $\varpi(\pp)\neq 0$ for some closed point $\pp\in X$ then $\varpi$ is \emph{locally decomposable} as a product $\varpi=\varpi_1\wedge\dots\wedge\varpi_q$ of $q$ 1-forms at a neighborhood of $\pp$.  We will call a differential form $\varpi$ $\emph{locally decomposable}$ if it is locally decomposable at every closed point $\pp\in X$.

\begin{definition}\label{folcodimq}
Let $\LL$ be a line bundle and $\omega:\LL\to \Omega^q_X$, with $1\leq q\leq dim(X)-1$, be a (non trivial) morphism of sheaves, we will say that the morphism is \emph{integrable} if
\begin{itemize}
\item[a)] For every local section $s$ of $\LL$ and $\Xi$ of $\bigwedge^{q-1} TX$,
$\omega(s)$ verifies
\begin{equation}\label{frobenius2}
 d (i_\Xi \omega(s))\wedge \omega(s)=0.
\end{equation}
\item[b)] $\Omega^q_X/\LL$ is torsion free.
\item[c)] The map
\[
 i_\Xi \omega\wedge \omega:\LL\to\Omega^{q+1}_X\otimes\LL^{-1}
\]
is zero for every local section $\Xi$ of $\bigwedge^{q-1} TX$.

\end{itemize}

We also say that $\omega$ determines a \emph{codimension-$q$ foliation}.
\end{definition}

\bigskip

Let $\omega:\LL\to\Omega^q$ be a integrable morphism. Then, we can consider
two maps,
\[
\xymatrix{
\bigwedge^{q-1} TX\otimes \LL\ar[r]^<<<<<{i_{(-)}\omega}
&\Omega^1_X\ar[r]^<<<<<{\omega\wedge -}&\Omega_X^{q+1}\otimes\LL^{-1}
}
\]
Condition c) on $\omega$ implies that this diagram is a complex
and it is easy to check that its homology is supported over the points
where $\omega$ is not decomposable.

\begin{remark}\label{folcodim1}
In the case of the projective space $\PP^n$ a codimension-1 foliation can be given by a global twisted differential form $\omega\in H^0(\PP^n,\Omega^1_{\PP^n}(e))$ such that it verifies the Frobenius integrability condition
\[
 \omega\wedge d\omega=0\ ,
\]
see \cite{fmi,moli,mmq}. Or, more similar to \cref{folcodimq}, by a morphism $\omega:\OO_{\PP^n}(-e)\to \Omega^1_{\PP^n}$.

The integrability condition is equivalent to item a) of \cref{folcodimq}. The item b) of the definition is equivalent to ask the generic property of having singular locus of codimension $\geq 2$. There is no equivalent for the item c) in the case of codim-1.

Let us denote the ring of homogeneous coordinates of $\PP^n$ by $S=\CC[x_0,\ldots,x_n]$ and the homogeneous component of degree $\ell$ of a graded $S$-module $M$ by $M(\ell)$. A global form, $\omega\in H^0(\PP^n,\Omega^1_{\PP^n}(e))$, can be written as
\[
 \omega = \sum_{i=0}^n A_i dx_i \ ,
\]
where the $A_i\in S(e-1)$ are homogeneous polynomials of degree $e-1$. The condition of descent of $\omega$ to $\PP^n$ is given by the annihilation of the contraction with the radial field $R=\sum_{i=0}^n x_i \frac{\partial}{\partial x_i}$, \emph{i.e.},
\[
 i_R(\omega) = \sum_{i=0}^n A_i x_i = 0\ .
\]

\end{remark}

\begin{definition}\label{def1}
We define the \emph{conormal sheaf associated with the foliation $\omega$}, denoted $\EE=\EE(\omega)$, as the kernel of $\omega\wedge -:\Omega^1_X\to \LL^{-1}\otimes\Omega^{q+1}_X$.
\end{definition}

Notice that by \cite[Proposition 1.1]{hartstable} we get that $\EE$ is a reflexive sheaf.

\bigskip

Composing a morphism $\omega:\LL\to \Omega^q$ with the contraction of forms with vector fields gives us a morphism
\[
\bigwedge^q TX\otimes \LL\to\OO_X.
\]
\begin{definition}\label{def-sing}
The ideal sheaf $\JJ(\omega)$ is defined to be the sheaf-theoretic image of the morphism $\bigwedge^q TX\otimes \LL\to\OO_X$. The subscheme it defines is called \emph{the singular scheme} of $\omega$ and denoted $\sing(\omega)\subseteq X$. We will denote it just by $\JJ$ if no confusion arises.
\end{definition}

\begin{remark}\label{singw}
 In the case of a codimension-1 foliation in the projective space $\PP^n$ we said in \cref{folcodim1} that $\omega$ can be written as $\omega = \sum_{i=0}^n A_i dx_i$ for some homogeneous polynomials $A_i\in S(e-1)$. In this situation the definition above coincides with the ideal $J(\omega) = (A_0,\dots, A_n)$ and the variety defined by it.
\end{remark}

From \cite{suwa-unfoldings_complex}[(4.6) Definition] we get the following definition for a codimension-$q$ foliation:

\begin{definition}\label{defUnfolding}
Let $T$ be a scheme, $\pp\in T$ a closed point, and $\LL\xrightarrow{\omega} \Omega^q_X$ a codimension-$q$ foliation on $X$. An \emph{unfolding} of $\omega$ is a codimension-$q$ foliation $\widetilde{\LL}\xrightarrow{\widetilde{\omega}}\Omega^q_{X\times T}$ on $X\times T$ such that $\widetilde{\omega}|_{X\times\{\pp\}}\cong \omega$. In the case $T=\mathrm{Spec}(\CC[\varepsilon]/(\varepsilon^2))$ we will call $\widetilde{\omega}$ a \emph{first order unfolding}.
\end{definition}

Let $\mathcal{D}=\mathrm{Spec}(\CC[\varepsilon]/(\varepsilon^2))$ be the scheme of \emph{dual numbers} over $\CC$, $0\in \mathcal{D}$ be its closed point, $\pi:X\times \mathcal{D}\to \mathcal{D}$ be the projection and $\iota:X\cong X\times\{0\}\hookrightarrow X\times \mathcal{D}$ be the inclusion.
Then the sheaf $\Omega^q_{X\times \mathcal{D}}$ can be decomposed as a direct sum of $\iota_*(\OO_X)$-modules as
 \[
  \Omega^q_{X\times \mathcal{D}}\cong \iota_*\Omega^q_X\oplus \varepsilon\cdot (\iota_*\Omega^q_X)\oplus \iota_*\Omega^{q-1}_X\wedge d\varepsilon.
 \]
Given a codimension-$q$ foliation determined by an integrable morphism $\LL\xrightarrow{\omega}\Omega^q_X$, and a first order unfolding $\widetilde{\omega}: \widetilde{\LL}\to \Omega^q_{X\times \mathcal{D}}$ of $\omega$, we take local generators $\varpi$ of $\LL(U)$ and $\widetilde{\varpi}$ of $\widetilde{\LL}(U\times \mathcal{D})$. Suppose $\omega$ and $\widetilde{\omega}$ are locally decomposable, then we may take $U$ small enough such that $\varpi$ and $\widetilde{\varpi}$ decompose as products
\[
 \varpi=\varpi_1\wedge\dots\wedge\varpi_q, \qquad \widetilde{\varpi}=\widetilde{\varpi}_1\wedge\dots\wedge \widetilde{\varpi}_q.
\]
Then we can write $\widetilde{\varpi}_i=\varpi_i + \varepsilon \eta_i + h_i d\varepsilon$
and the equations $d\widetilde{\varpi}_i\wedge \widetilde{\varpi}=0$ for $i=1,\dots, q$
are equivalent to the equations

\begin{equation}\label{ecu0.2}
 \left\{
\begin{aligned}
&d\eta_i\wedge \varpi+ d\varpi_i\wedge \left(\sum_{j=1}^q (-1)^j\eta_j \wedge\varpi_{\widehat{j}}\right)=0,\qquad (i=1,\dots,q), \\
&(dh_i-\eta_i)\wedge \varpi+ d\varpi_i\wedge \left(\sum_{j=1}^q (-1)^jh_j \varpi_{\widehat{j}}\right)=0,\qquad (i=1,\dots,q),
\end{aligned}
\right.
\end{equation}

where $ \varpi_{\widehat{j}}=\varpi_1\wedge\dots\wedge\varpi_{j-1}\wedge\varpi_{j+1}\wedge\dots\wedge\varpi_q\in \Omega^{q-1}_X(U)$.

\bigskip

As is shown in \cite[proof of 6.1 Theorem]{suwa-unfoldings_complex} the second equation implies the first. So we finally get that the equations $d\widetilde{\varpi}_i\wedge \widetilde{\varpi}=0$ for $i=1,\dots, q$ are equivalent to

\begin{equation}\label{equnfcodq}
\begin{aligned}
 \left\{
 (dh_i-\eta_i)\wedge \varpi+ d\varpi_i\wedge \left(\sum_{j=1}^q (-1)^jh_j \varpi_{\widehat{j}}\right)=0,\qquad (i=1,\dots,q)\ .
\right.
\end{aligned}
\end{equation}

\begin{remark}\label{remI}
 In the case of a codimension-1 foliation in the projective space $\PP^n$ a first order unfolding of a foliation defined by an integrable global twisted differential form $\omega\in H^0(\PP^n,\Omega^1_{\PP^n}(e))$ is just an integrable twisted differential form $\widetilde{\omega}\in H^0(\PP^n\times \mathcal{D},\Omega^1_{\PP^n\times\mathcal{D}|\CC}(e))$ that can be written as
 \[
  \widetilde{\omega} = \omega+ \varepsilon \eta  +h d\varepsilon\ .
  \]
  The integrability condition on $\widetilde{\omega}$ gives the equations
  \[
   \left\{
\begin{aligned}
 &\omega\wedge d\eta+d\omega\wedge \eta= 0\\
 &hd\omega=\omega\wedge (\eta-dh)
\end{aligned}
\right.
  \]
in an analog way to \cref{ecu0.2}. By the same argument of \cref{equnfcodq} they are equivalent to just the second equation
\[
   \left\{
\begin{aligned}
 hd\omega = \omega\wedge (\eta-dh)\ .
 \end{aligned}\right.
\]
Then, we can parameterize first order unfoldings as
\[
 U(\omega) = \{(h,\eta) \in H^0(\PP^n,\OO_{\PP^n}(e))\times H^0(\PP^n,\Omega^1_{\PP^n}(e)): hd\omega = \omega\wedge (\eta-dh)\ \}\big/ (0,\omega)\ ,
\]
where we are quotienting by $(0,\omega)$ to eliminate the trivial deformation induced by $\omega$, \emph{i.e.}, $\widetilde{\omega} = \omega + \varepsilon \omega = (1+\varepsilon) \ \omega$. Since $\varepsilon ^2=0$ we have that $1+\varepsilon$ is invertible, implying that $\omega$ and $(1+\varepsilon) \ \omega$ define the same foliation.

\

Let us denote as $(\CC^n,0)$ the infinitesimal neighbourhood around the origin of $\CC^n$, $\OO_{(\CC^n,0)}$ the ring of germs of analytic functions and $\Omega^1_{(\CC^n,0)}$ the germs of differential forms and consider $\omega_0\in \Omega^1_{(\CC^n,0)}$ a germ of an integrable differential form. Regarding the equations above we get all the same setting, \emph{i.e.},
\[
 U_{(\CC^n,0)}(\omega_0) = \{ (h,\eta)\in \OO_{(\CC^n,0)}\times \Omega^1_{(\CC^n,0)}: h d\omega_0=\omega_0\wedge(\eta-dh)\}\big/ (0,\omega_0)\ ,
\]
the only (and main) difference is that the image of the projection to the first coordinate
\[
\xymatrix@R=0pt{ U_{(\CC^n,0)}(\omega_0)  \ar[r]^{\pi_1} & \OO_{(\CC^n,0)}\\
(h,\eta) \ar@{|->}[r] & h}
\]
defines an ideal of the ring of germs of analytic functions $\OO_{(\CC^n,0)}$, $Im(\pi_1(\omega_0)) = I(\omega_0)$, instead of a finite vector space in the algebraic case. If we ask $\omega_0$ to have singular locus of codimension $\geq 2$, it can be easily seen that
\[
 I(\omega_0) \simeq U(\omega_0)\ .
\]
The ideal $I(\omega_0)$ defines the unfolding ideal on $(\CC^n,0)$ or, in our terms, the ideal defining the variety of persistent singularities. For more details on the local setting we refer to \cite{suwa-unfoldings_complex} and for the global algebraic setting to \cite{moli}.
\end{remark}

Let $\LL\xrightarrow{\omega}\Omega^q_X$ be an integrable morphism inducing a morphism $\mathcal{E}\to\Omega^1_X$. Composing $\omega$ with wedge product gives a morphism
 $\Omega^2_X\xrightarrow{\omega\wedge -} \LL^{-1}\otimes\Omega^{q+2}_X$. As $\omega$ is integrable the sheaf $\mathcal{E}\otimes\Omega^1_X$ is in the kernel of $\omega\wedge -$.
\begin{definition}
Following the notation above, we define the sheaf $H^2(\omega)$ as
 \[
  H^2(\omega):= \ker(\omega\wedge -) /\mathcal{E}\otimes\Omega^1_X.
 \]
\end{definition}

\begin{remark}
 The restriction of the de Rham differential to $\mathcal{E}$ gives a sheaf map $\mathcal{E}\to \Omega^2_X$ which is not $\OO_X$-linear but whose image is in $ \ker(\omega\wedge -) $ as $\omega$ is integrable.
 The projection of this map to $H^2(\omega)$ is however $\OO_X$-linear as $d g\varpi\cong g d\varpi \mod \mathcal{E}\otimes\Omega^1_X$ for every local section $\varpi$ of $\mathcal{E}$.
\end{remark}

 Let us denote $\LL\xrightarrow{\omega}\Omega^q_X$ to be an integrable morphism determining a subsheaf $\mathcal{E}\to\Omega^1_X$. Then we have the following definitions:

\begin{definition}\label{defII}
The subscheme of \emph{persistent singularities} of $\omega$ is  defined by the ideal sheaf $\II(\omega)$ to be the annihilator of $d(\mathcal{E})$ in $H^2(\omega)$. In other words the local sections of $\II(\omega)$ in an open set $U\subseteq X$ are given by
 \[
  \II(\omega)(U)=\{ h\in \OO_X(U)\ :\  \forall \varpi\in\mathcal{E}(U),\ hd\varpi=\sum_j \alpha_j\wedge\omega_j \},
 \]
for some local 1-forms $\alpha_j$ and 1-forms $\omega_j$ in $\mathcal{E}(U)$. We will denote it just as $\II$ if no confusion arises.

We also define $\per$ as the subvariety (subscheme) of $X$ associated to the ideal sheaf $\II(\omega)$, \emph{i.e.},
\[ \per = \OO_X/\II(\omega).\]
\end{definition}

For the case $q=1$ in \cref{equnfcodq}, we recall the following proposition from \cite[Proposition 3.6]{mmq2} which gives the name to the ideal of persistent singularities:
\begin{proposition}\label{prop}
 Let $\pp \in X$ be a point in $\sing(\omega)$, $\OO_{X,\pp}$ the local ring around $\pp$, and $X_\pp=\mathrm{Spec}(\OO_{X,\pp})$. Then $\pp$ is in the subscheme of persistent singularities if and only if for any first order unfolding $\widetilde{\omega}$  of $\omega$ in $X_\pp$, the point $(\pp, 0)\in X_\pp\times \mathcal{D}$ is a singular point of $\widetilde{\omega}$.
\end{proposition}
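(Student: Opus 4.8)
The plan is to localize at $\pp$ and reduce the stated equivalence to a single identity of ideals in $\OO_{X,\pp}$. First I would record the shape of the objects in the case $q=1$: a local generator $\varpi$ of $\LL$ is automatically locally decomposable, and condition (b) of \cref{folcodimq} forces the conormal sheaf $\EE$ to equal the image of $\omega$. Indeed $\EE=\ker(\omega\wedge-)$ is reflexive of generic rank $1$, hence a line bundle because $X$ is smooth; it contains $\mathrm{image}(\omega)$ with a quotient supported on $\sing(\omega)$ (codimension $\geq 2$ by (b)), hence torsion, while $\Omega^1_X/\mathrm{image}(\omega)$ is torsion free, so the two coincide and $\EE_\pp=\OO_{X,\pp}\cdot\varpi$. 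With this, the description of $\II$ in \cref{defII} collapses, using that $d\varpi\in\ker(\omega\wedge-)$ by Frobenius integrability, to
\[
 \II(\omega)_\pp=\{\,h\in\OO_{X,\pp}\ :\ h\,d\varpi=\alpha\wedge\varpi\ \text{for some }\alpha\in\Omega^1_{X,\pp}\,\}.
\]

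Next I would describe a first order unfolding of $\omega$ in $X_\pp$. Choosing a generator $\widetilde\varpi$ of $\widetilde\LL$ with $\widetilde\varpi|_{\varepsilon=0}=\varpi$ and using the direct sum decomposition of $\Omega^1_{X\times\mathcal{D}}$, one gets $\widetilde\varpi=\varpi+\varepsilon\eta+h\,d\varepsilon$, and by \cref{equnfcodq} in the case $q=1$ (cf. \cref{remI}) integrability is equivalent to the single equation $h\,d\varpi=\varpi\wedge(\eta-dh)$. Setting $\alpha:=dh-\eta$ this reads exactly $h\,d\varpi=\alpha\wedge\varpi$, and conversely any $(h,\alpha)$ with $h\,d\varpi=\alpha\wedge\varpi$ produces an unfolding via $\eta:=dh-\alpha$. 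Hence the set of functions $h$ occurring as the $d\varepsilon$-component of a first order unfolding of $\omega$ in $X_\pp$ is precisely the ideal $\II(\omega)_\pp$ above.

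Then I would read off the singular scheme of $\widetilde\omega$. In local coordinates $x_1,\dots,x_n$ on $X$ with $\varpi=\sum a_i\,dx_i$ and $\eta=\sum b_i\,dx_i$, one has $\widetilde\varpi=\sum_i(a_i+\varepsilon b_i)\,dx_i+h\,d\varepsilon$, so by \cref{def-sing} the ideal $\JJ(\widetilde\omega)$ at $(\pp,0)$ is generated by $a_1+\varepsilon b_1,\dots,a_n+\varepsilon b_n$ and $h$. Since $\pp\in\sing(\omega)$ we have $a_i(\pp)=0$, and each $\varepsilon b_i$ vanishes at $(\pp,0)$; therefore $(\pp,0)$ is a singular point of $\widetilde\omega$ if and only if $h\in\mathfrak{m}_\pp$. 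Combining the three steps, ``$(\pp,0)$ is a singular point of $\widetilde\omega$ for every first order unfolding of $\omega$ in $X_\pp$'' is equivalent to $\II(\omega)_\pp\subseteq\mathfrak{m}_\pp$, which, $\OO_{X,\pp}$ being local, is equivalent to $\II(\omega)_\pp\neq\OO_{X,\pp}$, i.e. to $\pp\in\per=V(\II(\omega))$, proving both implications.

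Once the normal forms are fixed the argument is essentially a dictionary, so the only points needing genuine care are the identification $\EE_\pp=\OO_{X,\pp}\cdot\varpi$ (which turns the abstract definition of $\II$ into the concrete equation $h\,d\varpi=\alpha\wedge\varpi$) and the sign bookkeeping linking \cref{equnfcodq} to that equation. I expect the first of these to be the main obstacle, since that is where all the hypotheses enter (smoothness of $X$, torsion-freeness of $\Omega^1_X/\LL$, and $q=1$); everything downstream of it is formal.
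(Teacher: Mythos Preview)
Your argument is correct. The identification $\EE_\pp=\OO_{X,\pp}\cdot\varpi$ is handled properly (the key point being that $\EE/\mathrm{image}(\omega)$ is a torsion subsheaf of the torsion-free $\Omega^1_X/\mathrm{image}(\omega)$, hence zero), the translation of \cref{defII} into the single equation $h\,d\varpi=\alpha\wedge\varpi$ follows, and your reading of $\JJ(\widetilde\omega)$ at $(\pp,0)$ is accurate once one notes $\varepsilon\,d\varepsilon=0$ so that the $d\varepsilon$-coefficient $h$ genuinely lies in $\OO_{X,\pp}$.

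As for comparison with the paper: there is nothing to compare. The paper does not prove this proposition; it merely \emph{recalls} it from \cite[Proposition~3.6]{mmq2} (see the sentence preceding the statement). Your write-up therefore supplies a self-contained proof where the paper gives only a citation, and the approach you take---reduce to the local equation $h\,d\varpi=\alpha\wedge\varpi$ and identify the $d\varepsilon$-components of unfoldings with $\II(\omega)_\pp$---is exactly the mechanism behind the result in the cited reference as well, so there is no methodological divergence to discuss.
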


\begin{definition}\label{defKK}
 The subscheme of \emph{Kupka singularities} of $\omega$ is the one defined by the ideal sheaf  $\KK(\omega) = \ann(\{d\omega\})$ where $\{d\omega\}$ is the class of $d\omega\in \Omega^{q+1}_X\otimes\OO_{\sing(\omega)}\otimes \LL^{-1}$.  We will denote it just as $\KK$ if no confusion arises.

We also define as $\kup$ the subvariety (subscheme) of $X$ associated to the variety defined by $\KK(\omega)$, \emph{i.e.},
\[
\kup = \OO_X/\KK(\omega).
\]
Let $\pp\in\kup$. We will call $\pp$ a \emph{strictly Kupka point} if $d\omega|_\pp\neq 0$, meaning that the evaluation of $d\omega$ at the point $\pp$ is different from zero.
\end{definition}

\begin{remark}
Again, in the case of a codimension-1 foliation in the projective space  $\PP^n$ we can also define the ideal $\KK(\omega)$ as the quotient ideal defined as $(\JJ(\omega).\Omega^2_{\CC[x_0,\ldots,x_n]|\CC}:d\omega)$. Also, the reduced structure associated to $\KK(\omega)$ differs from the variety defined as the closure of the strictly Kupka points, as it is explained in \cite[p.~1034]{mmq}.
\end{remark}

We will need the following proposition, also from \cite[Proposition 4.9]{mmq2}, for later use.

\begin{proposition}\label{JJcIIcKK2}
 Given an integrable morphism $\LL\xrightarrow{\omega}\Omega^q_X$ we have the inclusions $\per\subset \sing(\omega)$ and $\kup\subset\sing(\omega)$.
 If moreover $\omega$ is locally decomposable (\emph{i.e.} if $\EE$ is locally free) then we have $\kup\subset\per\subset\sing(\omega)$. 
\end{proposition}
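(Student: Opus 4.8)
The plan is to prove the three stated inclusions in the order $\kup\subset\per$, then $\per\subset\sing(\omega)$, then $\kup\subset\sing(\omega)$ — noting that the last follows formally from the first two, so really only two arguments are needed. All three are local statements about ideal sheaves, so I would fix an open set $U\subseteq X$ and work with sections over $U$, and since $\EE$ is reflexive (and under the local decomposability hypothesis, locally free), I may shrink $U$ so that $\varpi$ decomposes as $\varpi=\varpi_1\wedge\dots\wedge\varpi_q$ with the $\varpi_i$ forming part of a local coframe, hence $\EE(U)$ is freely generated as an $\OO_X(U)$-module by $\varpi_1,\dots,\varpi_q$.

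For $\per\subset\sing(\omega)$: I must show $\JJ(\omega)\subseteq\II(\omega)$ as ideal sheaves. Take a local generator $g$ of $\JJ(\omega)(U)$ coming from contracting $\omega$ with a $q$-vector field, i.e. $g = i_{\Xi}\varpi$ for some $\Xi\in\bigwedge^q TX(U)$. The content is that $g$ annihilates $d(\EE)$ in $H^2(\omega)$, i.e. for every $\varpi_i$ we can write $g\,d\varpi_i$ as a sum $\sum_j\alpha_j\wedge\omega_j$ with $\omega_j\in\EE(U)$. The key algebraic identity is the Leibniz/contraction formula relating $d\varpi$ and the $d\varpi_i$: since $d\varpi = \sum_i (-1)^{i-1}\varpi_1\wedge\dots\wedge d\varpi_i\wedge\dots\wedge\varpi_q$, one contracts with appropriate vectors dual to the coframe to extract each $g\,d\varpi_i$ modulo $\EE\otimes\Omega^1$, using that the integrability condition (a) of \cref{folcodimq} forces $d\varpi_i\wedge\varpi=0$. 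Equivalently, and perhaps more cleanly, I would use \cref{prop}: $g\in\JJ(\omega)(U)$ means $\pp\in\sing(\omega)$ for the corresponding point, and one checks directly that the constant (in $\varepsilon$) unfolding is singular there — but since the stronger inclusion under local decomposability is what we ultimately want, I prefer the direct computation with the coframe.

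For $\kup\subset\per$ (the main content, and the step requiring local decomposability): I must show $\II(\omega)\subseteq\KK(\omega)$, i.e. every persistent-singularity function annihilates $\{d\omega\}$ in $\Omega^{q+1}_X\otimes\OO_{\sing(\omega)}\otimes\LL^{-1}$. Let $h\in\II(\omega)(U)$, so for each $i$ there are local $1$-forms $\alpha^{(i)}_j$ with $h\,d\varpi_i=\sum_j\alpha^{(i)}_j\wedge\omega^{(i)}_j$, the $\omega^{(i)}_j\in\EE(U)$, hence each $\omega^{(i)}_j$ is an $\OO_X(U)$-combination of $\varpi_1,\dots,\varpi_q$. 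Now compute $h\,d\varpi$: using $d\varpi=\sum_i(-1)^{i-1}\varpi_{\widehat i}\wedge\! d\varpi_i$ (with the sign conventions of \cref{equnfcodq}) — wait, more carefully, $d\varpi = \sum_i \varpi_1\wedge\dots\wedge d\varpi_i \wedge\dots\wedge\varpi_q$ up to signs — we get $h\,d\varpi = \sum_i \pm\,\varpi_{\widehat i}\wedge(h\,d\varpi_i) = \sum_i\pm\,\varpi_{\widehat i}\wedge\big(\sum_j\alpha^{(i)}_j\wedge\omega^{(i)}_j\big)$. Since each $\omega^{(i)}_j$ lies in the span of the $\varpi_k$, the wedge $\varpi_{\widehat i}\wedge\alpha^{(i)}_j\wedge\omega^{(i)}_j$ contains $\varpi_1\wedge\dots\wedge\widehat{\varpi_i}\wedge\dots\wedge\varpi_q\wedge\varpi_k$, which vanishes unless $k=i$, in which case we pick up $\varpi_1\wedge\dots\wedge\varpi_q\wedge\alpha=\varpi\wedge\alpha$ up to sign. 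Thus $h\,d\varpi = \varpi\wedge\beta$ for some local $1$-form $\beta$, i.e. $h\,d\varpi\equiv 0$ modulo the image of $\varpi\wedge(-):\Omega^1_X\to\Omega^{q+1}_X\otimes\LL^{-1}$. The final point is that this is precisely the relation defining membership in $\KK(\omega)$: on $\sing(\omega)$ the class $\{d\omega\}$ is being killed because $\varpi\wedge\beta$ is exactly what is factored out — here one uses the description of $\KK$ via the quotient $(\JJ(\omega)\cdot\Omega^{q+1}: d\omega)$ type formula, and that $\varpi\wedge\Omega^1_X$ has coefficients in $\JJ(\omega)$ by the contraction definition of $\JJ$. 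I expect the bookkeeping of signs in the $d\varpi$ expansion and the precise identification of ``$h\,d\varpi=\varpi\wedge\beta$'' with ``$h\in\KK(\omega)$'' to be the only real obstacle; everything else is linear algebra over $\OO_X(U)$ enabled by local freeness of $\EE$. Finally, $\kup\subset\sing(\omega)$ follows by composing the two inclusions, or independently from $\KK(\omega)\supseteq\JJ(\omega)$ which is immediate from the quotient-ideal description.
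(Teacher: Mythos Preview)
The paper does not prove this proposition; it simply imports it as \cite[Proposition~4.9]{mmq2}. So there is no in-paper argument to compare against, and your proposal is effectively an attempt to supply what the cited reference does.

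Your sketch for the substantive inclusion $\II(\omega)\subseteq\KK(\omega)$ (equivalently $\kup\subset\per$) under local decomposability is essentially on the right track: writing $h\,d\varpi$ via the Leibniz expansion of $d(\varpi_1\wedge\dots\wedge\varpi_q)$ and using $h\,d\varpi_i\in\EE\wedge\Omega^1_X$ to collapse everything to $\beta\wedge\varpi\in\JJ\cdot\Omega^{q+1}_X$ is exactly the mechanism. One caveat: you assert that you may shrink $U$ so that the $\varpi_i$ form \emph{part of a local coframe}. That holds only on $X\setminus\sing(\omega)$, which is precisely the locus where all three ideal sheaves already equal $\OO_X$ and there is nothing to prove. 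What you actually have from the hypothesis is a local basis $\eta_1,\dots,\eta_q$ of the locally free sheaf $\EE$; you then need the additional (easy but not automatic) step that the image of the local generator of $\LL$ in $\Omega^q_X$ agrees, up to a function, with $\eta_1\wedge\dots\wedge\eta_q$, so that your Leibniz computation still produces a multiple of $\varpi$. Without this, the argument does not touch the singular points.

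Two further gaps. First, the inclusions $\per\subset\sing(\omega)$ and $\kup\subset\sing(\omega)$ are asserted \emph{without} the local decomposability hypothesis, but your entire setup presupposes it; you should separate these out. The inclusion $\JJ\subseteq\KK$ is immediate from the definition of $\KK$ as an annihilator over $\OO_X/\JJ$, and you say as much at the end, but $\JJ\subseteq\II$ in the general case needs its own argument. Second, even under local decomposability your treatment of $\JJ\subseteq\II$ is only gestured at (``contract with appropriate vectors dual to the coframe''); this step deserves an explicit contraction identity, since it does not follow from the $d\varpi$ expansion alone and again must be valid across $\sing(\omega)$, not just where a coframe extension exists.
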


\section{Differential forms with logarithmics poles}\label{poles}

Along this section recall the definitions of $q$-logarithmic differential forms and of residue of a differential form due to Deligne. For that we will follow \cite[3. P\^oles logarithmiques]{deligne}.

\bigskip

We will refer to the set $\{1,\ldots,s\}$ by $[s]$ and by $|I|$ to the cardinality of the (sub) set $I\subset [s]$.

\begin{definition} Let us consider $D$ a simple simple normal crossing divisor and let $j:X\backslash D \to X$ be the inclusion. We define the \emph{sheaf of logarithmic differential forms along $D$} as the smallest sheaf $\Omega^{*}_{X}(\log D)$ of $j_*\Omega^{*}_{X\backslash D}$, stable by wedge products and such that $\frac{df}{f}$ is a local section of $\Omega^1_X(\log D)$ when $f$ is a local section of $j_*\OO^*_X$, meromorphic along $D$.
\end{definition}

By \cite[Proposition 3.2]{deligne} we know that $\Omega^1_X(\log D)$ is locally free and
\[
 \Omega^q_X(\log D) = \bigwedge^q \Omega^1_X(\log D)\ .
\]
Recall that given a locally free sheaf $\mathcal{E}$ and a global section given by a morphism $\OO_X\xrightarrow{s}\mathcal{E}$ the \emph{zero locus} of $s$ is the subscheme defined by the sheaf of ideals defined by the image of the dual morphism $\mathcal{E}^{\vee}\xrightarrow{s^\vee}\OO_X$.

\begin{notation}
 We denote the zero locus of a section $\omega \in H^0(X,\Omega^q_X(\log D))$ by $Z(\omega)$.
\end{notation}

Let us consider $D=\left(\bigcup_{i=1}^s D_i\right)$, where each $D_i$ is an irreducible component of the divisor $D$ and $\widetilde{D}^r=\bigsqcup_{\{I\subset [s],\ |I|=r\}} \bigcap_{i\in I} D_i$, for $r\geq 1$. We will ask the $D_i$ to be smooth. In the case $r=1$ we will skip the superindex in $\wt{D}^1$ and write just $\wt{D}$ instead.

\bigskip

\

In \cite[p.~77]{deligne}  there is the definition of \emph{r\'esidu de Poincar\'e} as a map
\[
 Res: Gr_n^W(\Omega^q_X(\log D)) \to \Omega^q_{\widetilde{D}^n}(\epsilon^n)[-n]\ ,
\]
where the $\epsilon^n$ stands for choosing an ordering on the subsets of $\widetilde{D}^n$, \emph{i.e.} on the $I\subset [s]$, as it is explained in \cite[3.4, p. 75]{deligne}. In the following we will fix an ordering and remove the $\epsilon^n$.

\

Taking $n=q$ we get a map
\[
 Res: Gr_q^W(\Omega^q_X(\log D)) \to \Omega^q_{\widetilde{D}^q}[-q]=\OO_{\widetilde{D}^q}\ ,
\]

\

Then we can define the \emph{residue of a $q$-differential form} which consists of the extension by linearity the following application:

\[
\xymatrix@R=0pt{\Omega^q_X(\log D) \ar[r]^-{\mathrm{Res}^q } &\OO_{\wt{D}^q}\\
\frac{df_{i_1}}{f_{i_1}}\wedge\dots\wedge \frac{df_{i_q}}{f_{i_q}} \ar@{|->}[r]& 1 \in \OO_{\wt{D}^q}\\
\save[] *{\omega\ }="s" \restore & \save[] +<7pt,0pt> *{\  0, \text{ if }\omega\in \Omega^q_X}="t" \restore & \ar@{|->}"s";"t"
}
\]
where $i_1<\ldots<i_q$. If $\omega\in H^0(X,\Omega^q_X(\log D))$ we have
\[
\mathrm{Res}^q(\omega)\in H^0(X,\OO_{\wt{D}^q})=\bigoplus_{\substack{I\subset [s]  \\ |I|=q,\  1\in I}} H^0(X,\OO_{\bigcap_{i\in I} D_{i}})
.
\]

We can also define a residue application $R: \Omega^q_X(\log D)\to \Omega^{q-1}_{\wt{D}}(\log D')$ in the following way: first consider $D'$ a divisor of $\wt{D}$ defined by
\[
D'=\bigsqcup_{i=1}^s \wt{D}|_{D_i} = \bigsqcup_{\substack{j\neq i\\i,j=1}}^s \left(D_i\bigcap D_j\right),
\]
then we define $R$ as the map obtained by extending linearly the following,
\begin{equation}\label{R}
  \xymatrix@R=0pt{ \Omega^q_X(\log D)\ar[r]^-{R} & \Omega^{q-1}_{\wt{D}}(\log D')\\
 \frac{df_{i}}{f_{i}}\wedge \eta \ar@{|->}[r]& \eta|_{D_i},
 }
\end{equation}
where we would like to recall that we have fixed an ordering for the subsets of $D'$.

\

Notice that $\eta|_{D_i}$ will have logarithmic poles in the intersections of $D_i$ with the other components of $D$.

\begin{remark}\label{rem1}
 Consider $D_1$ as an irreducible component of $D$ and $\eta$ a $q$-logarithmic form in $\Omega^q_X(\log D)$. The logarithmic $(q-1)$-form $\eta|_{D_1}$ has as a residue
\[
\mathrm{Res}^{q-1}(\eta|_{D_1}) \in \bigoplus_{\substack{I\subset [s]  \\ |I|=q,\ 1\in I}} H^0(X,\OO_{\bigcap_{i\in I} D_i}).
\]
Meaning that the residue of $\eta|_{D_1}$ consists of those $\lambda_i$ that are residues of $\eta$ and such that they correspond to the intersection of $q$-components in which $D_1$ is one of them. In particular, if all the $\lambda_i$ of $\eta$ are $\neq 0$, then the same is true for the residues of the differential $(q-1)$-form $R(\eta)$.
\end{remark}

\begin{proposition}\label{propSingRes}
 Keeping the notation of the residue maps as above, lets consider $\omega\in H^0(X,\Omega^q_X(\log D))$ and define $\eta$ as $\eta=R(\omega)\in H^0(X,\Omega^{q-1}_{\wt{D}}(\log D'))$. Then we have that
 \[
  Z(\omega)\cap D\subset Z(\eta) .
 \]
\end{proposition}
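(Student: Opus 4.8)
The plan is to work locally near a point $\pp \in Z(\omega) \cap D$ and show that $\pp \in Z(\eta)$, where $\eta = R(\omega)$. Since the statement is schematic (about zero loci of sections of locally free sheaves), it suffices to check the inclusion of ideal sheaves locally. First I would choose local coordinates adapted to the simple normal crossing divisor $D$: around $\pp$, say $D$ is cut out by $x_1 \cdots x_k = 0$ (with $\pp$ possibly lying on several components $D_1, \dots, D_k$), and $\Omega^1_X(\log D)$ has the local frame $\frac{dx_1}{x_1}, \dots, \frac{dx_k}{x_k}, dx_{k+1}, \dots, dx_n$. Write $\omega$ in this frame as a combination of wedge products of $q$ of these generators, with regular coefficients. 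Because $\pp \in D$, after relabeling we may assume $\pp \in D_1$.

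The key computational step is to relate the dual morphism defining $Z(\omega) \subseteq X$ to the one defining $Z(\eta) \subseteq \wt{D}$. Split $\omega$ according to whether a term contains the factor $\frac{dx_1}{x_1}$ or not: write $\omega = \frac{dx_1}{x_1} \wedge \alpha + \beta$, where $\alpha \in \Omega^{q-1}_X(\log D)$ and $\beta \in \Omega^q_X(\log D)$ are expressed using only the remaining frame generators $\frac{dx_2}{x_2}, \dots, \frac{dx_k}{x_k}, dx_{k+1}, \dots, dx_n$. By the definition of $R$ in \cref{R} we have $\eta = R(\omega) = \alpha|_{D_1}$ (restricting to $x_1 = 0$, which is $D_1$ locally, where $D_1$ carries the induced SNC divisor $D'$). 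Now compute the dual morphism $(\Omega^q_X(\log D))^\vee \to \OO_X$: pairing $\omega$ against the basis of $\bigwedge^q$ of the dual frame, the coefficients of $\omega$ — i.e. the regular functions appearing as coefficients of the wedge monomials — are exactly the generators of the ideal of $Z(\omega)$. Likewise the coefficients of $\eta = \alpha|_{D_1}$ generate the ideal of $Z(\eta)$ inside $\OO_{D_1}$. Every coefficient of $\alpha$ is a coefficient of $\omega$, so its restriction to $D_1$ lies in the image of (ideal of $Z(\omega)$)$\cdot \OO_{D_1}$; hence the ideal of $Z(\eta)$ contains the restriction of the ideal of $Z(\omega)$. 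Intersecting $Z(\omega)$ with $D$ corresponds exactly to adding $x_1$ (the equation of $D_1$) and restricting, so we get $Z(\omega) \cap D_1 \subseteq Z(\eta)$ as subschemes of $D_1$; running this over all components $D_i$ through $\pp$ and gluing gives $Z(\omega) \cap D \subseteq Z(\eta)$ in $\wt{D}$.

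I would then check that this local description is compatible with the global ordering conventions fixed for the subsets of $D$ and $D'$ (the $\epsilon^n$ in Deligne's residue): the maps $R$ and $Z(-)$ are defined globally, so once the local inclusion of ideal sheaves holds at every point and is independent of the choice of adapted coordinates (up to sign, which does not affect ideals), the sheaf-level inclusion $\II_{Z(\eta)} \supseteq \II_{Z(\omega)\cap D}\big|_{\wt D}$ follows, which is precisely the claimed containment of subschemes.

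The main obstacle I anticipate is bookkeeping rather than conceptual: carefully tracking which wedge monomials of $\omega$ survive under $R$ versus which get "absorbed" into $\beta$ and disappear, and making sure that the scheme-theoretic intersection $Z(\omega) \cap D$ — as opposed to the set-theoretic one — is handled correctly when $\pp$ lies on several components of $D$ simultaneously (so that $R$ must be applied componentwise and the results matched on overlaps $D_i \cap D_j$). A secondary subtlety is confirming that the coefficients of $\eta$ really do generate $\II_{Z(\eta)}$ and not merely a subideal, i.e. that restriction to $D_1$ does not collapse the relevant functions in an unexpected way; but since $D_1$ is smooth and cut out by the single equation $x_1$, restriction is simply reduction mod $x_1$ and this causes no trouble.
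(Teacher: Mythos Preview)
Your argument is essentially the same as the paper's: split $\omega$ locally as $\frac{df_1}{f_1}\wedge\alpha+\beta$ with $\alpha,\beta$ not involving $\frac{df_1}{f_1}$, identify $\eta=\alpha|_{D_1}$, and observe that the coefficients of $\alpha$ in the local frame are a subset of the coefficients of $\omega$, so vanishing of $\omega$ at $\pp$ forces vanishing of $\eta$ at $\pp$. The paper does this pointwise while you phrase it in terms of ideal sheaves, but the content is identical.

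One slip to fix: you have the ideal containments reversed. From ``every coefficient of $\alpha$ is a coefficient of $\omega$'' you correctly deduce that each generator of $\II_{Z(\eta)}$ lies in $\II_{Z(\omega)}\cdot\OO_{D_1}$, i.e.\ $\II_{Z(\eta)}\subseteq \II_{Z(\omega)\cap D_1}$, not $\supseteq$ as you wrote (and similarly in your last paragraph). This inclusion of ideals is exactly what gives the desired $Z(\omega)\cap D_1\subseteq Z(\eta)$, so your final conclusion is correct; only the two intermediate ``contains/$\supseteq$'' statements need to be flipped.
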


\begin{proof}
Let us take $\pp\in Z(\omega)\cap D$, such that $\pp\in D_1$. Locally around $\pp$ we can write
\[
\omega = \frac{df_1}{f_1}\wedge\wt{\eta} +\wt{\omega},
\]
where $\wt{\eta}$ is a $(q-1)$-differential form in $X$ such that $\wt{\eta}|_{D_1}=\eta$ and $\wt{\omega}$ is an holomorphic $q$-differential form. Being $\pp$ a singular point then
\[
\frac{df_1}{f_1}\wedge\wt{\eta}|_\pp +\wt{\omega}|_\pp=0,
\]
since $\wt{\omega}$ is holomorphic it is linearly independent with $\frac{df_1}{f_1}\wedge\wt{\eta}$ near $\pp$ implying that
\[
\frac{df_1}{f_1}\wedge\wt{\eta}|_\pp =\wt{\omega}|_\pp=0.
\]
Then $\wt{\eta}|_\pp=0$ because $\frac{df_1}{f_1}$ is a local generator of $\Omega^1_X(\log D)$. Then we conclude that $\eta|_\pp=0$ as we wanted to see.
\end{proof}

\section{Main results}\label{main}

Now we define our main object of study, which are $q$-logarithmic foliations on a smooth variety $X$. For that, we first define a 1-logarithmic foliation. Then we will state and prove our main results, which are \cref{teo2}, \cref{corofinal}.

\begin{definition}\label{defqform}
Let $\varpi\in H^0(X,\Omega^1_X(\log D))$ be a 1-logarithmic form, where $D$ is a simple normal crossing divisor locally defined as $D=(\prod_{i=1}^s f_i)$, written as
\[
 \varpi = \sum_{i=1}^s\lambda_i \frac{df_i}{f_i}, \text{ with $\lambda_i\in\CC$}.
\]
Then we define the \emph{1-logarithmic foliation} associated to $\varpi$ as
\[
\omega= \left(\prod_{i=1}^sf_i\right) \sum_{i=1}^s\lambda_i \frac{df_i}{f_i}= \sum_{i=1}^s\lambda_i F_{\widehat{i}}\,  df_i\ ,
\]
where $F_{\widehat{i}}\, =\prod_{\substack{j=1\\j\neq i}}^s f_j$. This defines a global section
\[
 F\varpi \in H^0(X,\Omega^1_X\otimes\LL)\ ,
\]
for some line bundle $\LL$, and $F$ locally given by $\prod_{i=1}^s f_i$.

\end{definition}

\begin{definition}\label{defqfol}
 Let $\varpi\in H^0(X,\Omega^q_X(\log D))$ be a locally decomposable $q$-\linebreak logarithmic differential form, where $D$ is a simple normal crossing divisor locally defined as $D=(\prod_{i=1}^s f_i)$, written as
\[
 \varpi = \sum_{\substack{I\subset [s],\ |I|=q}} \lambda_I \frac{df_I}{f_I}\ .
\]
Then we define the \emph{$q$-logarithmic foliation} associated to $\varpi$ as
 \begin{equation}\label{q-logfol}
    \omega = \left(\prod_{i=1}^sf_i\right) \left(\sum_{\substack{I\subset [s],\ |I|=q}} \lambda_I \frac{df_I}{f_I}\right) = \sum_{\substack{I\subset [s],\ |I|=q}} \lambda_I F_{\widehat{I}} df_I\ ,
 \end{equation}
where $F_{\widehat{I}}$ denotes the product $\prod_{\substack{j\in [s],\ j\notin I}} f_j$ and $df_I=\wedge_{i\in I} df_i$. This defines a global section
\[
 F\varpi\in H^0(X,\Omega^q_X\otimes\LL)\ ,
\]
for some line bundle $\LL$ and $F$ locally given by $\prod_{i=1}^s f_i$. We will denote this space as $Log(X,D,\LL, q)$.
\end{definition}

\begin{remark}\label{logfol1}
 If $\omega$ is a $q$-logarithmic foliation then it is indeed a foliation. Since it is locally decomposable, then it verifies condition c) of \cref{folcodimq}. Condition b) comes from the fact that the functions $f_i$ are tranversal. And condition a) can be seen because c) holds and $\omega$ is a closed $q$-differential form multiplied by a function $F$.
 
 Moreover, the smooth part of the divisor $D$ is invariant by the foliation $\omega$. Indeed, using \cref{q-logfol}, we can see that the restriction of $\omega$ to the irreducible component $D_i=(f_i=0)$ of $D$ in $\Omega^{q}_X|_{D_i}$ is of the form
 \[
 	\omega|_{D_i} = df_i|_{D_i}\wedge \eta
 \]
 For some form $\eta$ in $\Omega^{q-1}_X|_{D_i}$. This in turn implies that $df_i|_{D_i}$ is in $\EE(\omega)|_{D_i}$, which means that the distribution annihilated by $\omega$ is a subsheaf of the tangent sheaf to the divisor $D_i$ i.e.: the divisor $D_i$ is invariant by the foliation $\omega$.
\end{remark}

\begin{remark}\label{genericity}
 Following \cref{defqfol}, we will refer to a \emph{generic} $q$-logarithmic foliation as one where all the line bundles induced by the divisors $f_i$ are ample and the scalar coefficients $\lambda_I$ are required to be pairwise distinct. See \cite[\S4, 4.7]{egaIII}.
\end{remark}

In \cref{corofinal} we will see that under generic conditions the ideal of persistent singularities $\II(\omega)$ can be given by the following formula
\[
 \II(\omega) = \bigcap_{\substack{K\subset [s]\\|K|=q+1}} \sum_{i\in K} \mathcal{I}(f_i)\ ,
\]
where by $\mathcal{I}(f)$ we denote the ideal generated by $f$.
For that, we would like to state now the following technical lemma, which is a generalization of \cite[Corollary 1.6]{suwa-multiform}.

\begin{lemma}\label{lemma1}
Let us consider an open set $U\subset X$ and $f_1,\ldots, f_{s}\in H^0(U,\OO_X)$ sections of $X$ in $U$, for $s\in\NN$ such that $s> q$. Suppose that $ht(f_{i_1},\ldots,f_{i_{s-q}})=s-q$ if $i_1,\ldots,i_{s-q}\in\{1,\ldots,s\}$ are distinct, where `$ht$' stands for \emph{height}. Assume that $J$ runs through every subset of cardinality $q$ of $\{1,\ldots,s\}$, $q\geq 1$, then we have
 \[
  \sum_{\substack{J\subset [s], \ |J|=q}}\mathcal{I}\left(F_{\widehat{J}}\right) = \bigcap_{\substack{K\subset [s]\\ |K|=q+1}} \sum_{i\in K}\mathcal{I}\left( f_{i}\right) \ .
  \]
\end{lemma}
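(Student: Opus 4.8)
The plan is to prove the two inclusions of the asserted identity separately. For ``$\subseteq$'' the argument is formal: given $J$ with $|J|=q$ and $K$ with $|K|=q+1$, we have $K\setminus J\neq\emptyset$ since $|K|>|J|$, so picking $i\in K\setminus J$ we get $i\notin J$, hence $f_i\mid F_{\widehat J}$ and $\mathcal I(F_{\widehat J})\subseteq\mathcal I(f_i)\subseteq\sum_{i\in K}\mathcal I(f_i)$; intersecting over all such $K$ and then summing over all $J$ yields the inclusion. For ``$\supseteq$'' I would argue stalkwise: both sides are coherent ideal sheaves on $U$, so it suffices to check equality of stalks at every closed point $\pp\in U$, i.e.\ to replace $\OO_X$ by the regular local ring $(\OO,\mathfrak m):=\OO_{X,\pp}$, with completion $\widehat\OO\cong\CC[[x_1,\dots,x_n]]$, and the $f_i$ by their germs.

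Working in $\OO$, the next step is to stratify according to $A:=\{\,i\in[s]:f_i\in\mathfrak m\,\}$, and set $m:=|A|$; the $f_i$ with $i\notin A$ are units. If $m\le q$, then every $K$ of size $q+1$ meets $[s]\setminus A$, so every summand $\sum_{i\in K}\mathcal I(f_i)$, hence the whole right-hand side, is the unit ideal; and since $m\le q<s$ we may pick $J\supseteq A$ with $|J|=q$, making $F_{\widehat J}=\prod_{j\notin J}f_j$ a unit, so the left-hand side is the unit ideal too. If $m\ge q+1$, the terms with $K\not\subseteq A$ are unit ideals (hence drop out of the intersection), and for $|J|=q$ with $J\not\subseteq A$ one has $\mathcal I(F_{\widehat J})=\big(\prod_{i\in A\setminus J}f_i\big)\subseteq\big(\prod_{i\in A\setminus J'}f_i\big)=\mathcal I(F_{\widehat{J'}})$, where $J'\subseteq A$ of size $q$ is obtained by enlarging $A\cap J$ (possible since $|A|\ge q+1$), so those terms drop out of the sum. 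This reduces the statement to the case where all $s$ functions lie in $\mathfrak m$; after relabelling, say we have $f_1,\dots,f_m\in\mathfrak m$ with $m\ge q+1$ and all subsets ranging inside $[m]$.

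In that reduced case I would use the hypothesis on the $f_i$: in the setting of the paper they are local equations at $\pp$ of the smooth, transversally meeting components of the normal crossing divisor $D$, so $f_1,\dots,f_m$ form part of a regular system of parameters of $\OO$; passing to $\widehat\OO$, which is faithfully flat over $\OO$ so an equality of ideals there descends, we may take $f_i=x_i$. Both sides are then monomial ideals, and a monomial $x^a$ lies in $\sum_{|J|=q}\big(\prod_{i\notin J}x_i\big)$ exactly when $|\mathrm{supp}(a)\cap[m]|\ge m-q$, while it lies in $\bigcap_{|K|=q+1}\sum_{i\in K}\mathcal I(x_i)$ exactly when no $(q+1)$-subset of $[m]$ is disjoint from $\mathrm{supp}(a)$, i.e.\ again when $|\mathrm{supp}(a)\cap[m]|\ge m-q$; the two conditions agree, which finishes the proof. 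I expect the only genuine obstacle to be this last local normalization --- namely getting, from the hypothesis, that the relevant $f_i$ can be taken as part of a coordinate system --- after which the combinatorial tail is immediate. An alternative that avoids coordinates is an induction on $s$: splitting each side by whether $s\in J$ (resp.\ $s\in K$), and applying the inductive hypothesis with parameters $(s-1,q-1)$ and $(s-1,q)$, the inductive step reduces to the distributivity $\bigcap_{|K'|=q,\,K'\subseteq[s-1]}\big((f_s)+\sum_{i\in K'}\mathcal I(f_i)\big)=(f_s)+\bigcap_{|K'|=q,\,K'\subseteq[s-1]}\sum_{i\in K'}\mathcal I(f_i)$ together with the fact that $f_s$ is a nonzerodivisor modulo $\bigcap_{|K|=q+1,\,K\subseteq[s-1]}\sum_{i\in K}\mathcal I(f_i)$.
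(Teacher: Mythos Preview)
Your inclusion $\subseteq$ is exactly the paper's pigeonhole argument. For $\supseteq$ you take a genuinely different route. The paper argues directly and combinatorially: invoking transversality to get $\mathcal I(f_i)\cap\mathcal I(f_j)=\mathcal I(f_if_j)$ for $i\neq j$, it analyses the ``worst'' way to pick one generator from each of the $\binom{s}{q+1}$ ideals $\sum_{i\in K}\mathcal I(f_i)$, and uses the identity $\sum_{i=1}^{s-q}\binom{s-i}{q}=\binom{s}{q+1}$ to conclude that any such system of choices involves at least $s-q$ distinct $f_i$, so that the intersection is spanned by the $F_{\widehat J}$. You instead pass to stalks, strip off the $f_i$ that are units at the point, and --- using the simple normal crossing structure of $D$ --- reduce to an equality of monomial ideals in $\CC[[x_1,\dots,x_n]]$, where both sides are visibly the ideal of monomials whose support meets $[m]$ in at least $m-q$ elements.

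The paper's argument is shorter and entirely combinatorial, but the passage from ``every system of representatives uses at least $s-q$ functions'' to the actual ideal inclusion is left informal. Your argument is longer but each step is an explicit ideal manipulation, and the terminal monomial check is transparent. You are right to flag that the coordinate normalisation step uses the SNC hypothesis on $D$ rather than only the height condition stated in the lemma; since the lemma is only ever applied in that setting this is harmless, and the paper's own appeal to ``transversality of the $\{f_1,\dots,f_s\}$'' is operating at the same level of generality. Your alternative inductive outline is closer in spirit to the paper's combinatorics and would recover the bare height hypothesis, at the cost of checking the modular distributivity and the nonzerodivisor statement you isolate.
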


\begin{proof}To see the first inclusion
\[
\sum_{\substack{J\subset [s], \ |J|=q}}\left(F_{\widehat{J}}\right) \subset \bigcap_{\substack{K\subset [s]\\ |K|=q+1}} \sum_{i\in K}\mathcal{I}\left( f_{i}\right)
\]
we just notice that $F_{\widehat{J}}$ has $s-q$ factors. Now, since $s-q+q+1=s+1>s$ then for every subset $K\subset\{1,\ldots, s\}$ such that $|K|=q+1$ there exists a $k_0\in K$ such that $k_0\notin J$. This implies that $f_{k_0}|F_{\widehat{J}}$ and $f_{k_0}\in \sum_{i\in K} \mathcal{I}(f_i)$.

\

Since this happens for every $J\subset\{1,\ldots,s\}$, $|J|=q$, and $K\subset\{1,\ldots,s\}$, $|K|=q+1$, we have that $F_{\widehat{J}}\in\sum_{i\in K} \mathcal{I}(f_i)$. Then
\[
 \sum_{\substack{J\subset [s], \ |J|=q}}F_{\widehat{J}}\subset \bigcap_{\substack{K\subset [s]\\ |K|=q+1}} \sum_{i\in K} \mathcal{I}(f_i)
\]
as we wanted to see.

\

For the other inclusion we will use the transversality of the $\{f_1,\ldots,f_s\}$ in the following sense: we can compute the intersection of $\mathcal{I}(f_i)\cap \mathcal{I}(f_j) = \mathcal{I}(f_i.f_j)$ if $i\neq j$. Then we would like to see which is the worst choice of polynomials in the intersection $\bigcap_{\{K\subset [s],\  |K|=q+1\}} \sum_{i\in K} \mathcal{I}(f_i)$ in the sense that we would get the least amount of polynomials in the product. Let us suppose that we choose $f_1$ as many times as possible, then we take $f_2$ as many times as possible and so on. For $f_1$ we have $\binom{s-1}{q}$ possible choices, meaning that there are $\binom{s-1}{q}$ ideals with $f_1$ in the intersection; for $f_2$ we have $\binom{s-2}{q}$ possible choices, and it keeps going on like this. By the following formula
\[
 \sum_{i=1}^{s-q} \binom{s-i}{q} = \binom{s}{q+1}
\]
we are seeing that we can take, at least, $s-q$ polynomials to fill the $\binom{s}{q+1}$ possible choices of each one of the ideals in the intersection. Then, if $|J|=q$, we have that $F_{\widehat{J}}\in \bigcap_{\{K\subset [s],\  |K|=q+1\}}  \sum_{i\in K} \mathcal{I}(f_i)$ concluding our proof.
\end{proof}

From now on let us fix $X$ to be a smooth variety, $D=\sum_{i=1}^s D_i= \left(\prod_{i=1}^s f_i\right)$ a simple normal crossing divisor such that its components $D_i=(f_i)$ are irreducible, smooth, and ample.

We will also consider $\varpi\in H^0(X,\Omega^q_X(\log D))$ a $q$-logarithmic differential form and $\omega =F\cdot \varpi \in H^0(X,\Omega^q_X\otimes \LL)$ the associated logarithmic $q$-differential foliation, where $F=\prod_{i=1}^s f_i$, as in \cref{defqfol}.

\begin{proposition}\label{propDimZomega}
Let $\varpi\in H^0(X,\Omega^q_X(\log D))$ be a $q$-logarithmic differential form such that $q\leq n-2$, every residue is not zero (\emph{i.e.}: such that every $\lambda_I$ is $\neq 0$ for every $I$), and such that $\varpi$ is locally decomposable as a product of $q$ logarithmic $1$-forms outside its zero locus. Then the zero locus of $\varpi$, $Z(\varpi)$, has dimension $\dim(Z(\varpi))= q-1$ if $q\leq n-2$ or $Z(\varpi)=\emptyset$.
\end{proposition}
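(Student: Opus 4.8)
I would prove \cref{propDimZomega} by splitting $Z(\varpi)=\big(Z(\varpi)\cap D\big)\sqcup\big(Z(\varpi)\setminus D\big)$, bounding the part lying on $D$ by $q-2$ and the part off $D$ by $q-1$ (this gives the first assertion), and then — using the extra hypothesis of global decomposability — showing that the part off $D$ is nonempty of pure dimension $q-1$. The part along $D$ I would handle by induction on $q$. For $q=1$ there is nothing on $D$: near a point of $D_i$, in a logarithmic frame the coefficient of $\tfrac{df_i}{f_i}$ in $\varpi$ equals $\lambda_i$ plus terms vanishing on $D_i$, so $\varpi$ is nonzero there because $\lambda_i\neq0$; hence $Z(\varpi)\cap D=\emptyset$. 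For the inductive step I would use \cref{propSingRes}: for each component $D_i$ the residue $\eta_i:=R(\varpi)|_{D_i}$ is a $(q-1)$-logarithmic form on the smooth variety $D_i$ relative to the simple normal crossing divisor $D'|_{D_i}=\bigsqcup_{j\ne i}(D_i\cap D_j)$, whose components are again ample, whose residues are, up to sign, the nonzero subfamily $\{\lambda_I:i\in I\}$ (by \cref{rem1}; in particular $\eta_i\not\equiv0$), and which is decomposable outside its zero locus — one takes $\tfrac{df_i}{f_i}$ as a factor of a local decomposition of $\varpi$ near $D_i$ (possible because $D_i$ is $\omega$-invariant, \cref{logfol1}), and $R$ carries it to a local decomposition of $\eta_i$. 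Then \cref{propSingRes} gives $Z(\varpi)\cap D_i\subseteq Z(\eta_i)$, and the inductive hypothesis gives $\dim Z(\eta_i)\le(q-1)-1=q-2$; taking the union over $i$, $\dim\big(Z(\varpi)\cap D\big)\le q-2$.

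For the part off $D$, set $U:=X\setminus D$, so $\Omega^q_X(\log D)|_U=\Omega^q_U$ and $\varpi|_U$ is holomorphic; the hypothesis ``$\varpi$ decomposable outside $Z(\varpi)$'' forces $i_\Xi\varpi\wedge\varpi=0$ identically (the relation already holds on a dense open set). Hence $\EE:=\ker\big(\varpi\wedge-\colon\Omega^1_U\to\Omega^{q+1}_U\big)$ is a reflexive subsheaf of generic rank $q$, and $Z(\varpi)\cap U$ is exactly the locus where $\EE\hookrightarrow\Omega^1_U$ (equivalently $\bigwedge^q\EE\to\Omega^q_U$) fails to be fibrewise injective: a degeneracy locus of expected codimension $n-(q-1)=n-q+1$, i.e.\ expected dimension $q-1$. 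To make this a genuine upper bound I would argue by a dimension count over the parameter space $\Lambda$ of admissible coefficient vectors $(\lambda_I)$: on the incidence variety $\mathcal Z=\{(\lambda,x)\in\Lambda\times X:\varpi_\lambda(x)=0\}$, the fibre over $x\in X$ is cut out in $\Lambda$ by the linear conditions $\varpi_\lambda(x)=0$, of codimension $\dim\big\langle\tfrac{df_I}{f_I}(x):|I|=q\big\rangle$; stratifying $X$ by the rank of $\big(df_1(x),\dots,df_s(x)\big)$ and using the expected codimensions of these rank strata to bound $\dim\mathcal Z$, the fibre-dimension theorem applied to $\mathcal Z\to\Lambda$ yields $\dim\big(Z(\varpi)\setminus D\big)\le q-1$ for generic $(\lambda_I)$.

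For the second (``moreover'') assertion I would use that when $\varpi$ is locally decomposable everywhere the sheaf $\EE(\varpi)$ is locally free of rank $q$ on all of $X$, so $Z(\varpi)$ is the degeneracy locus of $\EE(\varpi)\hookrightarrow\Omega^1_X(\log D)$ of expected codimension $n-q+1$; its class is a Thom--Porteous polynomial in the Chern classes of $\Omega^1_X(\log D)$ and of $\EE(\varpi)$, and the residue sequence $0\to\Omega^1_X\to\Omega^1_X(\log D)\to\bigoplus_i\OO_{D_i}\to0$ together with ampleness of the $D_i$ makes $c\big(\Omega^1_X(\log D)\big)$ positive enough that this class does not vanish; by the connectedness/nonemptiness theorem for degeneracy loci the locus is then nonempty, and combined with the upper bounds above it has pure dimension $q-1$.

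The hard part is the estimate off $D$, together with its matching lower bound. Since $\varpi$ varies only in the small linear system $Log(X,D,\LL,q)$, one must check that this system is ample/base-point-free enough on $U$ for the incidence count to push the degeneracy locus down to the expected dimension, and that imposing the Pl\"ucker relations on the $\lambda_I$ (which is precisely what makes $Z(\varpi)$ jump up to dimension $q-1$ in the first place) does not push it above $q-1$. Concretely, the work is in controlling the rank stratification of $\{df_i\}$ — ruling out that a generic $\varpi$ vanishes along a stratum on which the $df_i$ are abnormally dependent — and this is exactly where the genericity of the $\lambda_I$ and the ample simple normal crossing hypotheses on $D$ enter.
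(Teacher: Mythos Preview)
Your induction along $D$ via \cref{propSingRes} is exactly what the paper does, and your treatment of the ``moreover'' clause (viewing $Z(\varpi)$ as a degeneracy locus of $\EE\hookrightarrow\Omega^1_X(\log D)$ and invoking the expected-codimension lower bound) is essentially the paper's argument as well, though the paper states it more tersely.

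Where you diverge is in the upper bound for $Z(\varpi)\setminus D$. You treat it as a separate problem and attack it by an incidence-variety dimension count over the parameter space of $(\lambda_I)$'s --- and you correctly flag this as ``the hard part,'' since it requires controlling the rank stratification of $\{df_i\}$ and checking that the linear system $Log(X,D,\LL,q)$ moves freely enough on $U$. The paper sidesteps this entirely with a one-line trick you are missing: argue by contradiction. If $\dim Z(\varpi)\geq q$, then because each $D_i$ is \emph{ample} the intersection $Z(\varpi)\cap D$ has dimension $\geq q-1$; but \cref{propSingRes} gives $Z(\varpi)\cap D\subset Z(R(\varpi))$, and the inductive hypothesis bounds the latter by $q-2$. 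Done. In other words, ampleness of $D$ forces any putative high-dimensional component of $Z(\varpi)$ --- on or off $D$ --- to leave a trace on $D$ that is too big, so there is no need to bound the off-$D$ part directly. (The base case $q=1$ the paper simply imports from \cite{fmi}.)

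So your route is not wrong, but it trades a single application of the ample-divisor intersection inequality for a genuinely delicate genericity argument that you yourself leave partly unjustified. The paper's approach is both shorter and more robust.
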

\begin{proof}
The case $q=1$ is the main result of \cite{fmi}. Let $q\geq 2$ and suppose $\dim(Z(\varpi))\geq q$ (in particular we are assuming that $Z(\varpi)\neq \emptyset$). The ampleness of the components $D_i$ implies that the intersection of the zeroes of $\varpi$, with $D$ has dimension greater or equal to $q-1$. By \cref{propSingRes} the intersection of $Z(\varpi)$ with $D$ is inside the zero locus of the form $R(\varpi)$. $R(\varpi)$ is a $(q-1)$-logarithmic form on a complete intersection, the residues of $R(\varpi)$ are a subset of the residues of $\varpi$, so they are all non-null. Then, by induction, the set of zeros of $R(\varpi)$ must be of $\dim(Z(R(\varpi)))= q-2$, therefore $\dim(Z(\varpi))\leq q-1$. Given that $\EE$ is a locally free sheaf locally decomposable of rank $n$ and $s$ is a section of $\bigwedge^q \EE$, since $q\neq n-1$, then the expected dimension of $Z(s)$ is $q-1$, so if $Z(s)\neq \emptyset$ we have $\dim(Z(s))\geq q-1$. Therefore $\dim(Z(\varpi))=q-1$.
\end{proof}

\begin{remark} The expected dimension of the singular locus of a generic section of a vector bundle of rank $n$ is zero. In the above proposition, this would be the case if $q=n-1 $, in which case the condition corresponds to a foliation by curves. For further details on this topic, we refer to the article by Corrêa and D. da Silva Machado \cite{correa_machado}.
\end{remark}

\begin{proposition}\label{propKupka}
	Let $\omega\in H^0(X,\Omega^q_X\otimes \LL)$ be a $q$-logarithmic foliation, as in \cref{q-logfol}, and suppose that $\lambda_I\neq\lambda_J$ if $I\neq J$. The Kupka scheme of $\omega$ is given by
	\[
		\kup =\bigcup_{K\subset [s],\ |K|=q+1}\left(\bigcap_{i\in K} D_i \right).
	\]
\end{proposition}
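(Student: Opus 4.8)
The plan is to work from the definition of the Kupka scheme (\cref{defKK}), rewritten as a conductor: $\KK(\omega)$ is the ideal sheaf of local functions $h$ with $h\,d\omega\in\JJ(\omega)\cdot(\Omega^{q+1}_X\otimes\LL^{-1})$. So the first task is to compute $d\omega$. Trivializing $\LL$ locally and writing $\omega=F\varpi$, the form $\varpi=\sum_{|I|=q}\lambda_I\frac{df_I}{f_I}$ is closed (a sum of wedges of the closed forms $\frac{df_i}{f_i}$), hence $d\omega=dF\wedge\varpi$; expanding $dF=\sum_j F_{\widehat{j}}\,df_j$ and grouping the resulting terms by the $(q+1)$-set $K=I\cup\{j\}$ gives
\[
 d\omega=\sum_{\substack{K\subseteq[s]\\ |K|=q+1}} c_K\,F_{\widehat{K}}\,df_K,\qquad c_K=\sum_{m=1}^{q+1}(-1)^{m-1}\lambda_{K\setminus\{k_m\}}\quad(K=\{k_1<\dots<k_{q+1}\}),
\]
where the constant $c_K$ is non-zero by the genericity of the $\lambda_I$ (\cref{genericity}; for $q=1$ it is $\lambda_{k_2}-\lambda_{k_1}\neq0$). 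I would also note the identity $F\,d\omega=dF\wedge\omega$: since $dF\wedge\omega$ has its coefficients in $\JJ(\omega)$ this shows $F\in\KK(\omega)$, so $\kup\subseteq D$ and it suffices to identify the stalk $\KK(\omega)_{\pp}$ for $\pp\in D$. Finally, recall that by \cref{lemma1} the ideal cutting out the right-hand side, $\bigcap_{|K|=q+1}\sum_{i\in K}\mathcal{I}(f_i)$, equals $\sum_{|J|=q}\mathcal{I}(F_{\widehat{J}})$; this is the ideal I must match.

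Fix $\pp\in D$, let $D_{1},\dots,D_{t}$ be the components of $D$ through $\pp$, and take coordinates $f_1,\dots,f_t,x_{t+1},\dots,x_n$ near $\pp$ (possible since $D$ is simple normal crossing with smooth components), so $f_j$ is a unit at $\pp$ for $j>t$; write $\nu_I=\prod_{i\in[t]\setminus I}f_i$, so $F_{\widehat{I}}=\nu_I\cdot(\text{unit})$ near $\pp$ and $\omega=\sum_{|I|=q}\lambda_I\,(\text{unit}_I)\,\nu_I\,df_I$. The principal case is $t\geq q+1$. I would first check that $\JJ(\omega)_{\pp}$ equals the monomial ideal $\mathfrak a$ generated by the squarefree monomials of degree $t-q$ in $f_1,\dots,f_t$: the inclusion $\subseteq$ holds because $\deg\nu_I\geq t-q$, and $\supseteq$ because for a $q$-subset $I\subseteq[t]$ the coefficient of the basis form $df_I$ in $\omega$ is $\nu_I$ times a unit, and these $\nu_I$ run over all squarefree degree-$(t-q)$ monomials; moreover $\mathfrak a=\bigl(\sum_{|J|=q}\mathcal{I}(F_{\widehat{J}})\bigr)_{\pp}=\bigcap_{K\subseteq[t],\,|K|=q+1}(f_i:i\in K)_{\pp}$. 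Next, modulo $\JJ(\omega)_{\pp}\cdot\Omega^{q+1}_{X,\pp}$ one has $d\omega\equiv\sum_{K\subseteq[t],\,|K|=q+1}c_K F_{\widehat{K}}\,df_K$, since $F_{\widehat{K}}\in\mathfrak a$ precisely when $K\not\subseteq[t]$, while for $K\subseteq[t]$ the $df_K$ are distinct coordinate $(q+1)$-forms and $F_{\widehat{K}}=\nu_K\cdot(\text{unit})$ with $\nu_K=\prod_{i\in[t]\setminus K}f_i$ of degree $t-q-1$. Hence $h\in\KK(\omega)_{\pp}$ iff $h\,\nu_K\in\mathfrak a$ for every such $K$ (this is where non-vanishing of the $c_K$ is used), and a short monomial computation gives $(\mathfrak a:\nu_K)=(f_i:i\in K)_{\pp}$, so $\KK(\omega)_{\pp}=\bigcap_{K\subseteq[t],\,|K|=q+1}(f_i:i\in K)_{\pp}$, the localization of $\sum_{|J|=q}\mathcal{I}(F_{\widehat{J}})$.

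When $t\leq q$ the target is the unit ideal at $\pp$, so I must show $\KK(\omega)_{\pp}=\OO_{X,\pp}$, i.e. $d\omega\in\JJ(\omega)_{\pp}\cdot\Omega^{q+1}_{X,\pp}$. If $\omega(\pp)\neq0$ this is automatic; it always holds when $t=q$, since then $I=[t]$ is the only $q$-set with $F_{\widehat{I}}$ a unit at $\pp$ and $\omega(\pp)=\lambda_{[t]}\cdot(\text{unit})\,df_1(\pp)\wedge\dots\wedge df_t(\pp)\neq0$. When $t=0$, $\varpi$ is holomorphic near $\pp$ and, its coefficients generating $\JJ(\omega)_{\pp}$ (as $F$ is a unit), it lies in $\JJ(\omega)_{\pp}\cdot\Omega^{q}_{X,\pp}$; hence $d\omega=dF\wedge\varpi\in\JJ(\omega)_{\pp}\cdot\Omega^{q+1}_{X,\pp}$. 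The remaining case — $1\leq t\leq q-1$ with $\pp\in\sing(\omega)$, a point of the non-Kupka part of $\sing(\omega)$ that happens to lie on $D$ — is the subtle one. I would write $\omega=\theta\wedge\beta+\omega_1$ near $\pp$, with $\theta=df_1\wedge\dots\wedge df_t$, $\omega_1\in(f_1,\dots,f_t)\cdot\Omega^{q}_{X,\pp}$, and $\beta$ the $(q-t)$-logarithmic foliation (\cref{defqfol}) attached to the subdivisor $D_{t+1}+\dots+D_s$ with coefficients $\{\lambda_{[t]\cup L}\}_{|L|=q-t}$ — the iterated residue of $\varpi$ along $D_1,\dots,D_t$; these coefficients are again generic and pairwise distinct, being among the $\lambda_I$, and $\beta$ is locally decomposable because $\varpi$ is. Since $\pp$ lies on none of $D_{t+1},\dots,D_s$, near $\pp$ the form $\beta$ is a unit times a holomorphic closed form, so the argument of the case $t=0$ applied to $\beta$ gives $d\beta\in\JJ(\beta)_{\pp}\cdot\Omega^{q-t+1}_{X,\pp}$. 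Feeding this into $d\omega=(-1)^t\theta\wedge d\beta+d\omega_1$, together with the analogous reduction of $d\omega_1$ (modulo $(f_1,\dots,f_t)^2$ the form $\omega_1$ is a sum $\sum_{i\leq t}f_i\,\theta_{\widehat{i}}\wedge\beta^{(i)}$ of pieces of the same shape, $\beta^{(i)}$ being the residue of $\varpi$ along $D_1,\dots,\widehat{D_i},\dots,D_t$), should yield $d\omega\in\JJ(\omega)_{\pp}\cdot\Omega^{q+1}_{X,\pp}$ and hence $\KK(\omega)_{\pp}=\OO_{X,\pp}$.

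Assembling the cases, $\KK(\omega)_{\pp}=\bigl(\sum_{|J|=q}\mathcal{I}(F_{\widehat{J}})\bigr)_{\pp}$ for every $\pp\in X$, so $\KK(\omega)=\sum_{|J|=q}\mathcal{I}(F_{\widehat{J}})=\bigcap_{|K|=q+1}\sum_{i\in K}\mathcal{I}(f_i)$ by \cref{lemma1}, which is exactly the ideal sheaf of $\bigcup_{|K|=q+1}\bigcap_{i\in K}D_i$; that is, $\kup=\bigcup_{|K|=q+1}\bigcap_{i\in K}D_i$. I expect the main obstacle to be the last case $1\leq t\leq q-1$: keeping careful track of the error term $\omega_1$ and carrying the residue reduction through scheme-theoretically — so that $d\omega$ itself, and not merely its value at $\pp$, is seen to lie in $\JJ(\omega)_{\pp}\cdot\Omega^{q+1}_{X,\pp}$ — is where the real work lies. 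By contrast, the case $t\geq q+1$, although a computation, is essentially mechanical once the local normal form of $\omega$ is written down.
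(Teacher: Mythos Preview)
Your approach is genuinely different from the paper's. The paper argues by induction on $q$: for $q=1$ it quotes \cite{fmi} and checks that a regular point $\pp\in D_i\cap D_j$ satisfies $d\omega|_\pp=(\lambda_i-\lambda_j)\,g\,df_i\wedge df_j\neq 0$; for $q>1$ it takes the residue $\eta=R(\varpi)|_{D_j}$ along a component $D_j$ through $\pp$, invokes the inductive hypothesis to know that $d(F_{\widehat{j}}\,\eta)|_\pp\neq 0$, and claims (via \cref{propSingRes}) that this forces $d\omega|_\pp\neq 0$. The paper's argument is thus essentially set-theoretic, identifying the strictly Kupka points and taking closure. Your route---computing $d\omega$ explicitly, localizing, and identifying $\KK(\omega)_\pp$ as a colon ideal in the monomial ideal $\mathfrak a$---is a direct scheme-theoretic calculation that yields the ideal $\KK(\omega)$ itself, not just its support; this is what \cref{corofinal} ultimately wants, and you get there without the residue machinery. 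The paper's inductive reduction, on the other hand, never has to confront the combinatorics of the colon $(\mathfrak a:\nu_K)$ or the awkward case $1\leq t\leq q-1$.

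There is one genuine gap you should be aware of. You assert that $c_K=\sum_{m=1}^{q+1}(-1)^{m-1}\lambda_{K\setminus\{k_m\}}\neq 0$ ``by the genericity of the $\lambda_I$'', citing \cref{genericity}. But the only hypothesis in the proposition (and in \cref{genericity}) is that the $\lambda_I$ are pairwise distinct, and for $q\geq 2$ this does \emph{not} force $c_K\neq 0$: with $q=2$ and $K=\{1,2,3\}$ one has $c_K=\lambda_{\{2,3\}}-\lambda_{\{1,3\}}+\lambda_{\{1,2\}}$, which vanishes for $(\lambda_{\{1,2\}},\lambda_{\{1,3\}},\lambda_{\{2,3\}})=(1,3,2)$, all distinct. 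If some $c_K=0$ then, by your own computation, at a regular point $\pp$ of $\bigcap_{i\in K}D_i$ one has $d\omega\equiv 0\pmod{\JJ_\pp\Omega^{q+1}_\pp}$ and hence $\pp\notin\kup$, so the stated equality fails there. Your argument is correct under the stronger hypothesis $c_K\neq 0$ for all $K$ (which is indeed generic in the usual sense, being the complement of finitely many hyperplanes), but not under the hypothesis as written. The paper's inductive proof is terse at exactly this point as well, so the issue is not peculiar to your method; still, you should flag that pairwise distinctness is used only for $q=1$ and that for higher $q$ one needs the nonvanishing of the alternating sums $c_K$.

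Finally, your honest assessment of the case $1\leq t\leq q-1$ is accurate: showing $d\omega\in\JJ(\omega)_\pp\cdot\Omega^{q+1}_{X,\pp}$ at a singular point lying on fewer than $q$ components is the one place where real work remains, and the decomposition $\omega=\theta\wedge\beta+\omega_1$ you propose is the right starting point but does need the careful bookkeeping you anticipate. The paper's proof sidesteps this case entirely by working only at regular points of the $(q+1)$-fold intersections.
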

\begin{proof}
	We will proceed by induction on $q$. Suppose $q=1$, then \cite{fmi} implies $\sing(\omega)$ consists of isolated points and a codimension-$2$ subvariety given by the intersections of the components of $D$. If $\pp\in D_i\cap D_j$ is a regular point of $D_i\cap D_j$ then locally around $\pp$ we have $d\omega|_\pp = (\lambda_i-\lambda_j) g df_i\wedge df_j$ where $g$ is a unit of $\OO_{D_i\cap D_j,\pp}$. Therefore $\pp$ is a (strictly) Kupka  point.
	The case $q=1$ follows form the fact that, if $\sing(\omega)$ is reduced, then $\kup$ is the closure of the set of strictly Kupka points \emph{i.e.}, points $\pp\in\sing(\omega)$ such that $d\omega|_\pp\neq 0$.

	If $q>1$ then from the local expression of $\omega$ follows that  $\bigcap_{\{i_1,\dots, i_{q+1}\}} D_{i_j}\subseteq \sing(\omega)$ for any intersection of $q+1$ components of $D$. Let $\pp$ be a regular point of $\bigcup_{\{K\subset [s], |K|=q+1\}}\left(\bigcap_{i\in K} D_i \right)$ and suppose $d\omega|_\pp=0$. The logarithmic $(q-1)$-form $\eta:=R(\varpi)|_{D_j}$ defines a logarithmic foliation on $D_j$ given by the twisted form $F_{\widehat{j}} \, \eta$ whose Kupka scheme consists of the $q+1$-wise intersections of components of $D$ containing $D_j$. From the definition of $R(\varpi)$, \cref{R}, and \cref{propSingRes} it follows that if $d\omega|_\pp=0$ then $d(F_{\widehat{j}} \, \eta)|_\pp=0$ which contradicts the inductive hypothesis on $q$. So $\pp$ is a Kupka point of $\omega$ and the result follows.
\end{proof}

\

For the next Proposition we will need the following technical lemmas.

\

The next \cref{lemma11} is, somehow, a generalization of a result from Suwa, \cite[Theorem 1.11]{suwa-multiform}, that says that a universal unfolding for \linebreak$\omega= {f_1\ldots f_s\sum_{k=1}^s \lambda_k \frac{df_k}{f_k}}$ is of the form
\[
\widetilde{\omega} =\widetilde{f}_1\ldots \widetilde{f}_s\sum_{k=1}^s \lambda_k \frac{d\widetilde{f}_k}{\widetilde{f}_k},
\]
where $\widetilde{f}_k(x,t)$ is a function germ such that $\widetilde{f}_k(x,0)=f_k(x)$.

In case of a first order unfolding, \emph{i.e.} in $X\times \mathcal{D}$, where $\mathcal{D}=\mathrm{Spec}(k[\varepsilon]/(\varepsilon^2))$, we will have that $\widetilde{\omega} = \omega + \varepsilon \theta+ h d\varepsilon$, for some 1-logarithmic foliaton $\theta$ and some function germ $h$. Notice that the 1-logarithmic foliation $\theta$ is just (the addition of) $\omega$ with one of the $f_k$'s changed by another function germ, $g$, and the function $h$ is the product of the $f_\ell$'s, with $\ell\neq k$,  and $g$, and the  scalar $\lambda_k$. Without loss of generality we can suppose that $k=1$ and then we can write an unfolding $\widetilde{\omega}$ as

\begin{align}\label{eculemma2}
	\widetilde{\omega} &= F_{\widehat{1}}(f_1+\varepsilon g) \left(\lambda_1 \frac{d(f_1+\varepsilon g)}{f_1+\varepsilon g} + \sum_{\ell=2}^s \lambda_\ell \frac{df_\ell}{f_\ell}\right) \ .
\end{align}
Also notice that, by \cref{eculemma2} $\widetilde{\omega}\in H^0(X\times\mathcal{D},\Omega^1_{X\times\mathcal{D}|\CC}\otimes \widetilde{\LL})$ is also a $1$-logarithmic foliation in $X\times \mathcal{D}$.

\

Lets consider now $\varpi \in H^0(X,\Omega^q_X(\log D))$ to be a locally decomposable $q$-logarithmic differential form locally written as $\varpi = \omega_1\wedge\ldots\wedge\omega_q$, for \linebreak$\omega_i\in H^0(X,\Omega^1_X(\log D))$, $i=1,\ldots,q$, and  $\omega$ its associated logarithmic foliation. Let us write $\omega_i$ as
\[
 \omega_i = \lambda_1^i \ \frac{df_1}{f_1} + \omega_i^+ ,\ \ \  i=1,\ldots,q\ .
\]
Then $\omega$ can be written as
\begin{equation}\label{ecu1}
  \omega = F\left(\left( \sum_{i=1}^q (-1)^{i+1}\lambda_1^i \ \frac{df_1}{f_1}\wedge \omega_{\widehat{i}}^+ \right)+ \omega^+\right)= F\ \omega_1\wedge\ldots\wedge\omega_q\ ,
\end{equation}
where $\omega^+ = \omega_1^+\wedge\ldots\wedge\omega_q^+$ and we recall from \cref{ecu0.2} that $\omega_{\widehat{i}}^+ = \omega_1^+\wedge\cdots\wedge\widehat{\omega_i^+}\wedge\cdots\wedge\omega_q^+$.

\begin{lemma}\label{lemma11}
Following the notation above, let $\varpi \in H^0(X,\Omega^q_X(\log D))$ be a locally decomposable $q$-logarithmic differential form and  $\omega$ its associated logarithmic foliation. 

Then, for some locally defined function $g$,  there is a first order unfolding of $\omega$ of the following form
\begin{align}\label{eculemma}
     \widetilde{\omega} &=F_{\widehat{1}}(f_1+\varepsilon g) \left( \left(\sum_{i=1}^q (-1)^{i+1}\lambda_1^i \ \frac{d(f_1+\varepsilon g)}{f_1+\varepsilon g}\wedge \omega_{\widehat{i}}^+ \right)+\omega^+\right)= \\
     &=F_{\widehat{1}}(f_1+\varepsilon g)\ \left[\left(\lambda_1^1\  \frac{d(f_1+\varepsilon g)}{f_1+\varepsilon g} +\omega_1^+\right)\wedge \ldots\wedge \left(\lambda_1^q \ \frac{d(f_1+\varepsilon g)}{f_1+\varepsilon g} +\omega_q^+\right)\right]\nonumber
\end{align}
\end{lemma}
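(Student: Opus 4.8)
The plan is to check directly that the form $\widetilde{\omega}$ written down in \cref{eculemma} is a first order unfolding of $\omega$ in the sense of \cref{defUnfolding}; that is, that it determines a codimension-$q$ foliation on $X\times\mathcal{D}$ whose restriction to the central fibre $X\times\{0\}$ equals $\omega$. Everything being local, I would fix an open set $U$ on which $\varpi=\omega_1\wedge\cdots\wedge\omega_q$ with $\omega_i=\lambda_1^i\,\frac{df_1}{f_1}+\omega_i^+$, choose an arbitrary $g\in H^0(U,\OO_X)$, and set $\widetilde{f}_1:=f_1+\varepsilon g$. Two preliminary observations are immediate. First, the two expressions for $\widetilde{\omega}$ in \cref{eculemma} coincide: expanding the wedge product $\bigwedge_{i=1}^{q}\bigl(\lambda_1^i\,\frac{d\widetilde{f}_1}{\widetilde{f}_1}+\omega_i^+\bigr)$ and using $\frac{d\widetilde{f}_1}{\widetilde{f}_1}\wedge\frac{d\widetilde{f}_1}{\widetilde{f}_1}=0$ reproduces $\bigl(\sum_{i=1}^{q}(-1)^{i+1}\lambda_1^i\,\frac{d\widetilde{f}_1}{\widetilde{f}_1}\wedge\omega_{\widehat{i}}^+\bigr)+\omega^+$, which is exactly the computation that led to \cref{ecu1} with $f_1$ replaced by $\widetilde{f}_1$. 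Second, setting $\varepsilon=0$ gives $\widetilde{f}_1=f_1$, hence $\frac{d\widetilde{f}_1}{\widetilde{f}_1}=\frac{df_1}{f_1}$ and $F_{\widehat{1}}\widetilde{f}_1=F$, so $\widetilde{\omega}|_{X\times\{0\}}=\omega$ by \cref{ecu1}.

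The substantive point is that $\widetilde{\omega}$ really does define a foliation on $X\times\mathcal{D}$. I would establish this by exhibiting $\widetilde{\omega}$ as a $q$-logarithmic foliation on $X\times\mathcal{D}$, logarithmic along $\widetilde{D}:=(\widetilde{f}_1\cdot f_2\cdots f_s)$, and then running the (purely local) argument of \cref{logfol1} over $X\times\mathcal{D}$ in place of $X$. Using $\varepsilon^2=0$ one computes $\frac{d\widetilde{f}_1}{\widetilde{f}_1}=\frac{df_1}{f_1}+\varepsilon\,d\!\bigl(\tfrac{g}{f_1}\bigr)+\tfrac{g}{f_1}\,d\varepsilon$, which is a closed logarithmic $1$-form on $X\times\mathcal{D}$ with poles along $\widetilde{D}$. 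Hence each factor $\widetilde{\omega}_i:=\lambda_1^i\,\frac{d\widetilde{f}_1}{\widetilde{f}_1}+\omega_i^+$ is a closed logarithmic $1$-form along $\widetilde{D}$, the product $\widetilde{\varpi}:=\widetilde{\omega}_1\wedge\cdots\wedge\widetilde{\omega}_q$ is a closed, locally decomposable logarithmic $q$-form, and $\widetilde{\omega}=(F_{\widehat{1}}\widetilde{f}_1)\,\widetilde{\varpi}$ has exactly the shape of a $q$-logarithmic foliation as in \cref{defqfol} (just as in the $q=1$ remark following \cref{eculemma2}). Local decomposability is visible from the product expression and yields condition c) of \cref{folcodimq}; closedness of $\widetilde{\varpi}$ together with c) yields condition a); and condition b) amounts to the transversality of $\widetilde{f}_1,f_2,\dots,f_s$ on $X\times\mathcal{D}$. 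Since $\mathcal{D}$ is Artinian, $X\times\mathcal{D}$ has the same underlying space and codimension function as $X$, and $\widetilde{f}_1\equiv f_1$ modulo the nilpotent $\varepsilon$, so any $k$ of the sections $\widetilde{f}_1,f_2,\dots,f_s$ cut out a subscheme of codimension $k$, exactly as for $f_1,\dots,f_s$ on $X$; this gives b).

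Combining the three ingredients — agreement of the two formulas for $\widetilde{\omega}$, the equality $\widetilde{\omega}|_{X\times\{0\}}=\omega$, and the fact that $\widetilde{\omega}$ is a codimension-$q$ foliation on $X\times\mathcal{D}$ — shows that $\widetilde{\omega}$ is a first order unfolding of $\omega$, which is the content of \cref{lemma11}. The one step that genuinely requires care is the verification of condition b) over the non-reduced base $X\times\mathcal{D}$ (equivalently, making sense of and checking the ``logarithmic/transversal'' hypotheses there); once that is granted, the rest is formal manipulation of closed logarithmic forms and the substitution $\varepsilon=0$.
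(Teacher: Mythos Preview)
Your proposal is correct and follows essentially the same approach as the paper: the paper's proof is the single sentence ``The proof can be established by directly computing \cref{folcodimq}, see \cref{logfol1},'' and what you have written is precisely that direct computation spelled out in full, verifying conditions a), b), c) for $\widetilde{\omega}$ on $X\times\mathcal{D}$ via the same logarithmic-foliation argument used in \cref{logfol1}.
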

\begin{proof}

The proof can be established by directly computing \cref{folcodimq}, see \cref{logfol1}.
\end{proof}

\begin{lemma}\label{lemma2}
 Let $\varpi \in H^0(X,\Omega^q_X(\log D))$ be a locally decomposable $q$-logarithmic differential and $D_1=(f_1=0)$ one of the smooth irreducible components of the divisor $D$. Then, we can write $\varpi|_{D_1}$ in $\Omega^q_X(\log D)|_{D_1}$ as
 \[
  \varpi|_{D_1}= \frac{df_1}{f_1}\wedge\omega_2\wedge\ldots\wedge\omega_q\ .
 \]
\end{lemma}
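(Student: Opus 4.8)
The plan is to prove the statement locally around a closed point $\pp\in D_1$. Pick local (analytic or étale) coordinates in which $D$ is cut out by $f_1,\dots,f_k$ with $D_1=(f_1)$, so that $\Omega^1_X(\log D)$ is free with frame $\tfrac{df_1}{f_1},\dots,\tfrac{df_k}{f_k}$ together with enough holomorphic differentials. Since $\varpi$ is locally decomposable, write $\varpi=\omega_1\wedge\dots\wedge\omega_q$ with each $\omega_i$ a local logarithmic $1$-form and expand, as in \cref{ecu1}, $\omega_i=\lambda_1^i\,\tfrac{df_1}{f_1}+\omega_i^+$ with $\omega_i^+$ having no pole along $D_1$. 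As $\varpi$ does have a pole along $D_1$ (its residue there is non-zero), not all the coefficients $\lambda_1^i$ vanish, and we may reorder so that $\lambda_1^1$ is a unit near $\pp$. Acting on the $q$-tuple $(\omega_1,\dots,\omega_q)$ by the invertible change of basis $\omega_i\mapsto\omega_i-\tfrac{\lambda_1^i}{\lambda_1^1}\omega_1$ for $i\geq 2$ — which multiplies $\varpi$ only by a unit — we may assume $\omega_2,\dots,\omega_q\in\Omega^1_X(\log(D-D_1))$ and $\omega_1=\lambda_1^1\tfrac{df_1}{f_1}+\omega_1^+$. Then
\[
\varpi=\lambda_1^1\,\tfrac{df_1}{f_1}\wedge(\omega_2\wedge\dots\wedge\omega_q)+\omega_1^+\wedge\omega_2\wedge\dots\wedge\omega_q,
\]
where the second summand lies in $\Omega^q_X(\log(D-D_1))$, i.e.\ it is regular along $D_1$.

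Next, restrict the locally free sheaf $\Omega^q_X(\log D)$ to $D_1$. The first summand restricts to $\lambda_1^1\big|_{D_1}\,\tfrac{df_1}{f_1}\big|_{D_1}\wedge(\omega_2\wedge\dots\wedge\omega_q)\big|_{D_1}$, whose wedge factor $(\omega_2\wedge\dots\wedge\omega_q)|_{D_1}$ is exactly the value on $D_1$ of the Poincaré residue $R(\varpi)$ of \cref{poles} and is visibly a decomposable $(q-1)$-logarithmic form on $D_1$; after absorbing the unit $\lambda_1^1|_{D_1}$ into one of the factors we rename these factors $\omega_2,\dots,\omega_q$. It remains to see that the restriction of the holomorphic tail $\omega_1^+\wedge\omega_2\wedge\dots\wedge\omega_q$ to $D_1$ is zero. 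This is where the invariance of $D_1$ under the foliation $\omega=F\varpi$ enters: by \cref{logfol1} one has $\omega|_{D_1}=df_1|_{D_1}\wedge\eta$, and unwinding this through $\omega=F\varpi$ and the normalisation above — together with the fact that, $\varpi$ being locally decomposable, $\EE(\omega)$ is locally free and $\EE(\omega)|_{D_1}$ is precisely the rank-$q$ span of $df_1|_{D_1}$ and $\omega_2^+|_{D_1},\dots,\omega_q^+|_{D_1}$ — forces $\omega_1^+|_{D_1}$ to lie in the span of $\omega_2^+|_{D_1},\dots,\omega_q^+|_{D_1}$, so that the tail vanishes. We conclude $\varpi|_{D_1}=\tfrac{df_1}{f_1}\wedge\omega_2\wedge\dots\wedge\omega_q$ in $\Omega^q_X(\log D)|_{D_1}$.

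The step I expect to be the main obstacle is this last vanishing of the holomorphic tail $\omega_1^+\wedge\omega_2\wedge\dots\wedge\omega_q$ on $D_1$: a priori the "pole-free" factors of the local decomposition carry no constraint, and column operations among $\omega_2,\dots,\omega_q$ cannot remove a possible $df_1|_{D_1}$-component of $\omega_1^+|_{D_1}$. The clean way to package the argument is to phrase everything through the residue map $R$ of \cref{poles}: $R(\varpi)|_{D_1}$ is the twisted $(q-1)$-logarithmic form defining the codimension-$(q-1)$ foliation induced on the invariant divisor $D_1$, it is decomposable because it is the residue of the decomposable form $\varpi$, and invariance of $D_1$ (\cref{logfol1}) identifies it with $\eta|_{D_1}$; this simultaneously yields the factorisation $\varpi|_{D_1}=\tfrac{df_1}{f_1}\wedge\omega_2\wedge\dots\wedge\omega_q$ and controls the holomorphic part. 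One should also note that, by the ampleness/transversality hypotheses, $\pp$ may be taken on the smooth locus of $D$ where the frames above exist, which is routine.
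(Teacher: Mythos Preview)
Your argument parallels the paper's up to the point you flag as the obstacle: the vanishing on $D_1$ of the ``holomorphic tail'' $\omega_1^+\wedge\omega_2\wedge\cdots\wedge\omega_q$. This step is a genuine gap, and in fact the tail need not vanish. Take $q=2$, $s=3$ and $\varpi=(\tfrac{df_1}{f_1}+\tfrac{df_2}{f_2})\wedge(\tfrac{df_2}{f_2}+\tfrac{df_3}{f_3})$; after your normalisation $\omega_1^+=\tfrac{df_2}{f_2}$ and $\omega_2=\tfrac{df_2}{f_2}+\tfrac{df_3}{f_3}$, so the tail restricts to $\tfrac{df_2}{f_2}\wedge\tfrac{df_3}{f_3}\big|_{D_1}\neq 0$ in $\Omega^2_X(\log D)|_{D_1}$. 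Equivalently $\tfrac{df_1}{f_1}\wedge\varpi|_{D_1}=\tfrac{df_1}{f_1}\wedge\tfrac{df_2}{f_2}\wedge\tfrac{df_3}{f_3}\big|_{D_1}\neq 0$, so $\varpi|_{D_1}$ is not divisible by $\tfrac{df_1}{f_1}$ and the identity, read literally, fails. Your appeal to $\EE(\omega)|_{D_1}$ cannot repair this: the forms $\omega_i^+$ still have logarithmic poles along $D_2,\dots,D_s$ and are not sections of $\EE(\omega)\subset\Omega^1_X$, and the invariance relation $\omega|_{D_1}=df_1|_{D_1}\wedge\eta$ constrains only the holomorphic form $\omega=F\varpi$; since $df_1|_{D_1}=f_1\cdot\tfrac{df_1}{f_1}\big|_{D_1}=0$ in $\Omega^1_X(\log D)|_{D_1}$, it carries no information about $\varpi|_{D_1}$ there.

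The paper's own proof makes the same unjustified leap, passing from $\omega|_{D_1}=df_1|_{D_1}\wedge\omega'$ in $\Omega^q_X|_{D_1}$ directly to $\varpi|_{D_1}=\tfrac{df_1}{f_1}\big|_{D_1}\wedge\omega'$; the example above shows that this passage is invalid. What your decomposition \emph{does} establish, however, is that the Poincar\'e residue $R(\varpi)|_{D_1}=\lambda_1^1\,\omega_2|_{D_1}\wedge\cdots\wedge\omega_q|_{D_1}$ is locally decomposable, and that is precisely what the corollary following the lemma requires. So the downstream application survives even though the lemma as literally stated does not.
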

\begin{proof}
 By \cref{logfol1} we have that $D_1$ is invariant by the foliation $\omega$ associated with $\varpi$, so $df_1|_{D_1}\wedge\omega|_{D_1}=0$ in $\Omega^{q+1}_X|_{D_1}$. Then, as $\omega=\prod_i f_i \cdot \varpi$, we have that, if $\omega|_{D_1}=df_1|_{D_1}\wedge \omega'$ then 
 \[ 
    \varpi|_{D_1} = \frac{df_1}{f_1}|_{D_1}\wedge \omega',
 \]
in $\Omega^{q+1}_X(\log D)|_{D_1}$, as $\varpi|_{D_1}$ is locally decomposable and $\frac{df_1}{f_1}|_{D_1}$ is a local generator of $\Omega^{q+1}_X(\log D)|_{D_1}$, the above equation implies 
\[
	\prod_{i\neq 1}f_i\cdot \varpi|_{D_1}= \left.\frac{df_1}{f_1}\right|_{D_1}\wedge\omega_2\wedge\cdots\wedge\omega_q\ .
\]
\end{proof}

As a corollary of the previous lemma we have the following statement:

\begin{corollary}
 $\varpi \in H^0(X,\Omega^q_X(\log D))$ be a locally decomposable $q$-logarithmic differential. Then we have that the residue operation defined in \cref{R} as
 \begin{equation*}
  \xymatrix@R=0pt{ \Omega^q_X(\log D)\ar[r]^-{R} & \Omega^{q-1}_{\wt{D}}(\log D')\\
 \frac{df_{i}}{f_{i}}\wedge \eta \ar@{|->}[r]& \eta|_{D_i},
 }
\end{equation*}
takes locally decomposable global sections of $\Omega^q_X(\log D)$ and maps them to locally decomposable global sections of $\Omega^{q-1}_{D_i}(\log D')$ for each irreducible component $D_i$ of $D$, so we get map
 \begin{equation*}
  \xymatrix@R=0pt{ Log(X,D,\LL,q) \ar[r]^-{R} & Log(D_i,D',\LL,q-1).
 }
\end{equation*}
\end{corollary}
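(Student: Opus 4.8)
The plan is to reduce everything to \cref{lemma2}. Observe first that $R$ is, by its very construction (\cref{R}), a sheaf homomorphism $\Omega^q_X(\log D)\to\Omega^{q-1}_{\wt{D}}(\log D')$, so it already induces a $\CC$-linear map on global sections $H^0(X,\Omega^q_X(\log D))\to\bigoplus_i H^0(D_i,\Omega^{q-1}_{D_i}(\log D'))$. In particular, for each irreducible component $D_i$ of $D$ the form $R(\varpi)$ is \emph{automatically} a global section of $\Omega^{q-1}_{D_i}(\log D')$; the only substantive point is that it is \emph{locally decomposable} whenever $\varpi$ is.

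To see this, fix $D_i$ and a closed point $\pp\in D_i$. Since $\varpi$ is locally decomposable, \cref{lemma2} provides a neighbourhood $U$ of $\pp$ and logarithmic $1$-forms $\omega_2,\dots,\omega_q\in\Omega^1_U(\log D)$ with $\varpi|_{D_i\cap U}=\frac{df_i}{f_i}\wedge\omega_2\wedge\cdots\wedge\omega_q$; without loss we may take $\omega_2,\dots,\omega_q$ with no $\frac{df_i}{f_i}$-component. Feeding this into the recipe defining $R$ in \cref{R} — extract the coefficient of $\frac{df_i}{f_i}$ and restrict to $D_i$ — gives $R(\varpi)|_{D_i\cap U}=\omega_2|_{D_i}\wedge\cdots\wedge\omega_q|_{D_i}$, which exhibits $R(\varpi)$ near $\pp$ as a wedge of $q-1$ sections of $\Omega^1_{D_i}(\log D')$ (the restricted forms $\omega_j|_{D_i}$ acquire logarithmic poles precisely along the $D_i\cap D_j$ with $j\neq i$, i.e. along $D'$, as noted right after \cref{R}). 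As $\pp$ was arbitrary, $R(\varpi)$ is locally decomposable on $D_i$. There is no gluing issue: $R$ is defined intrinsically on $\Omega^q_X(\log D)$, so the various local expressions $\omega_2|_{D_i}\wedge\cdots\wedge\omega_q|_{D_i}$ necessarily agree with the globally defined section $R(\varpi)$.

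It remains to phrase this at the level of foliations. Given $\omega=F\varpi\in Log(X,D,\LL,q)$, set $R(\omega):=F_{\widehat{i}}|_{D_i}\cdot R(\varpi)$, where $F_{\widehat{i}}=\prod_{j\neq i}f_j$ is a local equation of $D'$ on $D_i$; by \cref{defqfol} together with the previous paragraph this is exactly the $(q-1)$-logarithmic foliation on $D_i$ associated to the locally decomposable $(q-1)$-logarithmic form $R(\varpi)$, hence a well-defined element of $Log(D_i,D',\LL,q-1)$, with the same abuse of notation as in the statement (the twisting line bundle on $D_i$ being the restriction of $\LL$; strictly $\OO_{D_i}(D')$ and $\LL|_{D_i}$ differ by the normal bundle $\OO_{D_i}(D_i|_{D_i})$, which is harmless here). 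I do not expect a genuine obstacle: the content is essentially \cref{lemma2} plus this bookkeeping, the only mildly delicate points being independence of the local decomposition (resolved by intrinsicality of $R$) and the line-bundle normalization on $D_i$. If desired, one may add, using \cref{rem1}, that the residues of $R(\varpi)$ are exactly the $\lambda_I$ with $i\in I$, so $R$ also carries generic $q$-logarithmic foliations to generic $(q-1)$-logarithmic foliations.
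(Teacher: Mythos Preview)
Your proposal is correct and follows essentially the same route as the paper: the paper's proof is the one line ``immediate using \cref{lemma2} and \cref{logfol1}'', and your argument is precisely a fleshed-out version of this, using \cref{lemma2} to obtain the local factorisation $\varpi|_{D_i}=\frac{df_i}{f_i}\wedge\omega_2\wedge\cdots\wedge\omega_q$ and then reading off local decomposability of $R(\varpi)$. Your appeal to \cref{defqfol} in the last paragraph plays the role of the paper's reference to \cref{logfol1} (which is what guarantees that the $(q-1)$-logarithmic form on $D_i$ actually defines a foliation); the extra remarks on line-bundle normalisation and on \cref{rem1} are bonus observations the paper does not spell out.
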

\begin{proof}
 The proof is immediate using \cref{lemma2} and \cref{logfol1}.
\end{proof}

\begin{remark}\label{lemma3}
 Notice that taking the residue along $\{f_i=0\}$ and taking an unfolding of the type of \cref{lemma11}, by changing $f_j$ to $f_j+\varepsilon g$, with $j\neq i$, commute.
\end{remark}

\begin{proposition}
	Following the notation and assumptions above, if a point $\pp\notin \kup$ then there is a local first order unfolding $\widetilde{\omega}$, of the type of \cref{lemma11}, of $\omega$ around $\pp$ such that $\widetilde{\omega}|_\pp\neq 0$.
\end{proposition}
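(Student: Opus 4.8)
The plan is to treat the two cases $\pp\notin D$ and $\pp\in D$ separately, the latter by reducing the codimension via a residue along a component of $D$ through $\pp$ and inducting on $q$. We may assume $\pp\in\sing(\omega)$, since otherwise $\omega|_\pp\neq 0$ and \emph{any} unfolding of the type of \cref{lemma11} already has $\widetilde\omega|_\pp\neq 0$ (its class modulo $\varepsilon$ and $d\varepsilon$ is $\omega|_\pp$); and by \cref{propKupka}, $\pp\notin\kup$ means $\pp$ lies on at most $q$ of the components $D_1,\dots,D_s$. Fix a component $D_k$ and a local function $g$ near $\pp$ and let $\widetilde\omega=\widetilde F\cdot\widetilde\varpi$ be the unfolding of \cref{lemma11} obtained by replacing $f_k$ with $f_k+\varepsilon g$ (here $\widetilde\varpi$ is the correspondingly deformed logarithmic form). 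Expanding \cref{eculemma} one finds
\[
 \widetilde\omega\equiv\omega+\varepsilon\,\theta+\big(g\cdot F_{\widehat{k}}\,\varpi^{\mathrm{res}}_k\big)\wedge d\varepsilon \pmod{\varepsilon^2},
\]
up to a sign on the last term, where $\varpi^{\mathrm{res}}_k$ is the residue of $\varpi$ along $f_k$ (normalised so that $\tfrac{df_k}{f_k}\wedge\varpi^{\mathrm{res}}_k$ collects the terms of $\varpi$ that involve $\tfrac{df_k}{f_k}$) and $F_{\widehat{k}}\,\varpi^{\mathrm{res}}_k$ is a holomorphic $(q-1)$-form. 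So it is enough to exhibit $k$ and $g$ with $g(\pp)\neq 0$ and $(F_{\widehat{k}}\,\varpi^{\mathrm{res}}_k)|_\pp\neq 0$.

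First assume $\pp\notin D$. Then $F_{\widehat{k}}(\pp)\neq 0$ for every $k$, so we only need some $k$ with $\varpi^{\mathrm{res}}_k|_\pp\neq 0$. From the identity $\sum_{k=1}^{s}\tfrac{df_k}{f_k}\wedge\varpi^{\mathrm{res}}_k=q\,\varpi$ we get $\bigcap_{k=1}^{s}Z(\varpi^{\mathrm{res}}_k)\subseteq Z(\varpi)$; since $s\geq q+1>\dim Z(\varpi)$ (by \cref{propDimZomega}), genericity of the coefficients $\lambda_I$ forces this common zero locus to be empty, so $\varpi^{\mathrm{res}}_k|_\pp\neq 0$ for some $k$, and any $g$ with $g(\pp)\neq 0$ finishes this case. (When $q=1$ the residues $\varpi^{\mathrm{res}}_k=\lambda_k$ are nonzero constants, which is exactly the argument of \cite{fmi}; this also disposes of the base of the induction, because for $q=1$ a point of $\sing(\omega)$ on a single component $D_1$ would satisfy $\omega|_\pp=\lambda_1 F_{\widehat{1}}(\pp)\,df_1|_\pp\neq 0$, so it lies on no component of $D$.)

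Now assume $\pp\in D$, say $\pp\in D_1$, and argue by induction on $q$. By \cref{lemma2} and its Corollary the residue $R(\varpi)|_{D_1}$ is a locally decomposable $(q-1)$-logarithmic form on $D_1$ along $D'=\bigsqcup_{j\neq 1}(D_1\cap D_j)$ whose residues form a nonempty subset of those of $\varpi$ (\cref{rem1}), hence are nonzero and pairwise distinct; let $\omega'$ be its associated $(q-1)$-logarithmic foliation on $D_1$. Since $\pp$ lies on at most $q$ components of $D$ it lies on at most $q-1$ components of $D'$, so $\pp\notin\mathpzc{Kup}(\omega')$ by \cref{propKupka}, and the inductive hypothesis gives an unfolding $\widetilde{\omega'}$ of $\omega'$ of the type of \cref{lemma11} — replacing some $f_j|_{D_1}$ with $j\neq 1$ by $f_j|_{D_1}+\varepsilon g'$ — with $\widetilde{\omega'}|_{(\pp,0)}\neq 0$. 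Lift $g'$ to a local function $g$ near $\pp$ in $X$ with $g|_{D_1}=g'$ and let $\widetilde\omega$ be the unfolding of $\omega$ obtained by replacing $f_j$ by $f_j+\varepsilon g$. Since $f_1$ is untouched, $D_1\times\{0\}$ is the central fibre of the corresponding component of the normal crossing divisor of $\widetilde\omega$ on $X\times\mathcal{D}$, and by \cref{lemma3} the residue of $\widetilde\varpi$ along that component is the unfolding of $R(\varpi)$, i.e.\ the form attached to $\widetilde{\omega'}$. If $\widetilde\omega|_{(\pp,0)}=0$, then $(\pp,0)$ is a singular point of $\widetilde\omega$ lying on $\{f_1=0\}$; as it lies on fewer than $q+1$ components of the divisor it is not a Kupka point, so near $(\pp,0)$ the singular scheme of $\widetilde\omega$ coincides with the zero locus of $\widetilde\varpi$, and \cref{propSingRes} yields $(\pp,0)\in Z\big(R(\widetilde\varpi)\big)$, i.e.\ $(\pp,0)$ is singular for $\widetilde{\omega'}$, a contradiction. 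Hence $\widetilde\omega|_{(\pp,0)}\neq 0$, completing the induction.

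The step I expect to require the most care is this transfer in the inductive step: verifying, through \cref{lemma2} and \cref{lemma3}, that $R$ commutes with the chosen unfolding and — crucially for the applicability of \cref{propSingRes} — that near a point lying on fewer than $q+1$ components of the divisor the singular scheme of a locally decomposable logarithmic foliation coincides with the zero locus of its defining logarithmic form. The case $\pp\notin D$ additionally relies on the genericity of the $\lambda_I$ to ensure that the residue forms $\varpi^{\mathrm{res}}_1,\dots,\varpi^{\mathrm{res}}_s$ have no common zero.
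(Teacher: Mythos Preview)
Your inductive treatment of the case $\pp\in D$ is essentially the paper's argument: take the residue along a component through $\pp$, apply the inductive hypothesis on $D_1$, and lift the unfolding back using the commutation in \cref{lemma3}. The only difference is the closing step. The paper simply observes that the residue is a linear projection of $\widetilde\varpi$ in a local frame of $\Omega^q_X(\log D)$, so $R(\widetilde\varpi)|_\pp\neq 0$ forces $\widetilde\varpi|_\pp\neq 0$ and hence $\widetilde\omega|_\pp\neq 0$. Your route through \cref{propSingRes} together with the assertion that ``near a point on fewer than $q+1$ components the singular scheme of $\omega$ coincides with $Z(\varpi)$'' is unnecessary and, as stated, not justified anywhere in the paper; you would need to prove that claim separately, and it is exactly the kind of statement that the paper only obtains \emph{after} this proposition (in \cref{teo2}).

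The genuine gap is in your case $\pp\notin D$. You correctly reduce to finding a $k$ with $\varpi^{\mathrm{res}}_k|_\pp\neq 0$, and the identity $\sum_k\frac{df_k}{f_k}\wedge\varpi^{\mathrm{res}}_k=q\varpi$ does give $\bigcap_k Z(\varpi^{\mathrm{res}}_k)\subseteq Z(\varpi)$. But this inclusion tells you nothing about $\pp$: since $\pp\in\sing(\omega)\setminus D$ we have $F(\pp)\neq 0$ and therefore $\pp\in Z(\varpi)$ already. The subsequent sentence --- that $s\geq q+1>\dim Z(\varpi)$ together with genericity of the $\lambda_I$ forces $\bigcap_k Z(\varpi^{\mathrm{res}}_k)=\emptyset$ --- is not an argument; nothing in the paper's genericity hypotheses (pairwise distinct $\lambda_I$, ample $D_i$) yields that conclusion, and a dimension count only shows that $\bigcap_k Z(\varpi^{\mathrm{res}}_k)$ is a proper subset of $Z(\varpi)$, not that it misses the particular point $\pp$. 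The paper avoids this entirely by a different mechanism: since $d\varpi=0$, every $\eta\in\EE(\omega)$ satisfies $d\eta\wedge\varpi=0$; when $q<n-1$ \cref{propDimZomega} gives $\codim\sing(\omega)\geq 3$, so Saito's generalized de~Rham lemma (\cite{saito}) yields $d\eta\in\EE\wedge\Omega^1$, i.e.\ $1\in\II_\pp$, and then \cite[Proposition~4.1]{mmq2} produces the required unfolding. To repair your approach you would have to actually prove that the residue forms have no common zero outside $D$, which is a nontrivial statement not implied by the running hypotheses.
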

\begin{proof}
	Let us first assume that $\pp\notin D$, then the foliation is also defined locally by $\varpi$ which is the product of $\omega$ by a non-vanishing local function and also is (locally) decomposable as $\varpi=\omega_1\wedge\dots\wedge\omega_q$. To prove the existence of an unfolding $\widetilde{\omega}$ as in the proposition we need to show that there are $1$-forms $\alpha_{ij}$ such that $\omega_i=\sum_j d\omega_j\wedge\alpha_{ij}$. 
	In order to prove this we will use the \emph{generalized de Rham lemma} of \cite{saito} which states that, if the depth   of the singular ideal of the decomposable form $\varpi$ is greater than $3$ then for every $2$-form $\beta$ such that $\beta\wedge\varpi=0$ there are $1$-forms $\alpha_1,\dots,\alpha_q$  such that $\beta=\sum_i\omega_i\wedge\alpha_i$.
	Now, being a global logarithmic form, $d\varpi=0$. Then, for any $1$-form $\eta$ such that $\eta\wedge\varpi=0$, we have 
	\[
		d\eta\wedge\varpi=d(\eta\wedge\varpi)=0.
	\]
	If $q<n-1$, that is, if $\omega$ is \emph{not} a foliation by curves, then we have by \cref{propDimZomega} that $\codim(\sing(\omega))\geq 3$, which, as $X$ is a regular variety, in particular locally Cohen-Macaulay, implies that $\mathrm{depth}(\JJ)\geq 3$. Then we can apply the {generalized de Rham lemma} which implies there are $1$-forms $\alpha_1,\dots,\alpha_q$ such that
	\[
		d\eta = \sum_i \omega_i\wedge\alpha_i.
	\]
	So $1\in \II(\varpi)=\II(\omega)$, the definition ideal of $\per$, then $\pp$	is never in $\per$. Then, by \cite[Proposition 4.1]{mmq2} there exists an $\widetilde{\omega}$ unfolding of $\omega$ such that $\widetilde{\omega}|_{\mathfrak{p}}\neq 0$.

For the case when $\mathfrak{p}\in D$ we can suppose that $\mathfrak{p}\in \{f_1=0\}$.
We will employ an inductive approach. The case where $q=1$ is given by \cref{eculemma2} using \cite[Theorem 1.11]{suwa-multiform} and \cite[Proposition 3.6, Theorem 3.14]{mmq2}.

Consider a $q$-logarithmic foliation given by $\omega$ and take the residue at $\{f_1=0\}$, $R(\frac{\omega}{F})=\eta$. Now we can consider an unfolding of $\eta$ following \cref{lemma11} by changing $f_2$ to $f_2+\varepsilon g$, let us call this unfolding by $\widetilde{\eta}$. Using the inductive hypotheses we have that $\widetilde{\eta}(\mathfrak{p})\neq 0$.

By \cref{lemma3},  $\widetilde{\eta}$ is the residue along $\{f_1=0\}$ of an unfolding of $\omega$ given by changing $f_2$ to $f_2+\varepsilon g$.
By virtue of the linearity of the residue operation, if the residue is nonzero at $\mathfrak{p}$, then the differential form must also be nonzero at $\mathfrak{p}$, thereby completing the proof. \end{proof}

\begin{corollary}\label{coro1}
We have the inclusion of closed subvarieties
\[
(\per)_{\mathrm{red}}\subseteq (\kup)_{\mathrm{red}}\ .
\]
\end{corollary}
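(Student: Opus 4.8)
The plan is to establish the inclusion on closed points by contraposition, feeding the Proposition just proved into the characterization of persistent singularities from \cref{prop}. Since $(\per)_{\mathrm{red}}$ and $(\kup)_{\mathrm{red}}$ are the reduced subschemes underlying closed subvarieties of $X$, it is enough to show that every closed point of $\per$ lies in $\kup$.

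So I would start from a closed point $\pp\in\per$. By \cref{JJcIIcKK2} we have $\per\subseteq\sing(\omega)$, hence $\pp\in\sing(\omega)$ and \cref{prop} is applicable to $\pp$. Assuming $\pp\notin\kup$, the preceding Proposition hands us a first order unfolding $\widetilde{\omega}$ of $\omega$ over $X_\pp$ (indeed one of the shape of \cref{lemma11}) with $\widetilde{\omega}|_\pp\neq 0$. The next step is to observe that, by \cref{def-sing}, the singular scheme of the twisted $q$-form $\widetilde{\omega}$ is cut out by the image of its contractions with $q$-vector fields, so $\widetilde{\omega}|_\pp\neq 0$ says exactly that $(\pp,0)\in X_\pp\times\mathcal{D}$ is not a singular point of $\widetilde{\omega}$.

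Finally I would invoke \cref{prop}: since $(\pp,0)$ is a singular point of \emph{every} first order unfolding of $\omega$ over $X_\pp$ whenever $\pp\in\per$, and we have produced one unfolding violating this, it follows that $\pp\notin\per$ — a contradiction. Therefore $\pp\in\kup$, and running over all closed points of $\per$ gives $(\per)_{\mathrm{red}}\subseteq(\kup)_{\mathrm{red}}$. I expect the only delicate point to be the identification of $\sing(\widetilde{\omega})$ with the zero scheme of $\widetilde{\omega}$, i.e. checking that the non-vanishing of the form at $(\pp,0)$ really does exclude that point from the singular locus of the unfolding; everything else is a direct chaining of results already in place, so no genuine obstacle is anticipated. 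One could also note that, combined with the inclusion $\kup\subseteq\per$ from \cref{JJcIIcKK2}, this actually yields $(\per)_{\mathrm{red}}=(\kup)_{\mathrm{red}}$, though only the stated inclusion is needed here.
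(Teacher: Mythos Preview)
Your argument is correct and follows essentially the same route as the paper: use the preceding Proposition to produce, at any $\pp\notin\kup$, a first order unfolding that does not vanish at $\pp$, and then invoke the characterization of persistent singularities to conclude $\pp\notin\per$. The only cosmetic differences are that the paper separates the cases $\pp\notin D$ (where it recycles the de~Rham-lemma computation from the preceding proof to get $1\in\II_\pp$ directly) and $\pp\in D$, and that for the passage from ``$\widetilde{\omega}|_\pp\neq 0$'' to ``$\pp\notin\per$'' the paper cites \cite[Proposition~4.1]{mmq2} rather than the paper's own \cref{prop}; your uniform treatment via the preceding Proposition plus \cref{prop} is equally valid.
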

\begin{proof}
	Around a point $\pp\notin D$, by the same argument of the above proof we have that $\pp$ is never in $\per$. If $\pp\in D$ then the above proposition together with \cite[Proposition 4.1 ]{mmq2} is showing that $\pp\notin \kup$ implies $\pp\notin \per$, from which the corollary follows.
\end{proof}

\begin{proposition}\label{prop1}
	Let $\pp\in\kup$ be a strictly Kupka point. Suppose that, when representing $\mathrm{Res}^q(\omega)$ as a vector in
	\[
	\mathrm{Res}^{q}(\omega) \in \bigoplus_{ I\subset [s],\ |I|=q} H^0(X,\OO_{\bigcap_{i\in I} D_i})\simeq \CC^{|\{ I\subset [s],\ |I|=q\}|}\ ,
	\]
	 there are two coordinates with different coefficients $\lambda_I\neq\lambda_J$.
	Then locally around $\pp$ we have $\II_\pp\subset \KK_\pp$.
\end{proposition}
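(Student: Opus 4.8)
The plan is to compute the local ideal $\KK_\pp$ completely at the strictly Kupka point, showing it is the \emph{reduced} ideal of the codimension-$(q+1)$ component of $\sing(\omega)$ passing through $\pp$; once $\KK_\pp$ is known to be radical, the inclusion $\II_\pp\subseteq\KK_\pp$ follows formally from the already-established equality of reduced structures. First I would fix local analytic coordinates: by \cref{propKupka} the point $\pp$ lies on some $\bigcap_{i\in K}D_i$ with $|K|=q+1$, so after relabelling I take $K=\{1,\dots,q+1\}$ and coordinates $x_1,\dots,x_n$ with $D_i=\{x_i=0\}$ for $i\le q+1$, $\pp$ the origin, and the remaining $f_j$ units near $\pp$. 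Trivializing $\LL$ and writing $F=u\,x_1\cdots x_{q+1}$ with $u$ a unit, \cref{q-logfol} yields locally
\[
\omega=u\Big(\sum_{k=1}^{q+1}\lambda_{I_k}\,x_k\,dx_{\widehat k}+x_1\cdots x_{q+1}\,\beta\Big),\qquad I_k=K\setminus\{k\},
\]
where $dx_{\widehat k}=dx_1\wedge\cdots\widehat{dx_k}\cdots\wedge dx_{q+1}$ and $\beta$ is the holomorphic remainder, which is closed since $\varpi$ is. Using that the residues at $\pp$ do not vanish, the coefficients of $\omega$ generate $\JJ_\pp=(x_1,\dots,x_{q+1})$, the reduced ideal of $\bigcap_{i\in K}D_i$.

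Next I would compute $d\omega$ modulo $\JJ$. Since $\beta$ is closed and every term of $\omega$ already lies in $\JJ$, a direct calculation gives
\[
d\omega\equiv u\,c\;dx_1\wedge\cdots\wedge dx_{q+1}\pmod{\JJ\cdot\Omega^{q+1}_X},\qquad c=\sum_{k=1}^{q+1}(-1)^{k-1}\lambda_{I_k}.
\]
Because $\pp$ is strictly Kupka we have $d\omega|_\pp\neq0$, which forces $c\neq0$; hence $uc$ is a unit and, in the free $\OO_{\sing(\omega)}$-module $\Omega^{q+1}_X\otimes\OO_{\sing(\omega)}\otimes\LL^{-1}$, the class $\{d\omega\}$ is a unit multiple of the basis vector $dx_1\wedge\cdots\wedge dx_{q+1}$. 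By \cref{defKK} its annihilator is then
\[
\KK_\pp=\{\,h:hc\in\JJ\,\}=\JJ_\pp=(x_1,\dots,x_{q+1}),
\]
a radical (indeed prime) ideal.

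Finally I would invoke the coincidence of reduced structures. Since $\omega$ is locally decomposable, \cref{JJcIIcKK2} gives $(\kup)_{\mathrm{red}}\subseteq(\per)_{\mathrm{red}}$, while \cref{coro1} gives the reverse inclusion; hence $\sqrt{\II}=\sqrt{\KK}$, and localizing at $\pp$ together with the previous step yields $\sqrt{\II_\pp}=\sqrt{\KK_\pp}=\KK_\pp$. Therefore $\II_\pp\subseteq\sqrt{\II_\pp}=\KK_\pp$, which is the assertion. The main obstacle I expect lies in the second paragraph: one must check carefully that the holomorphic remainder $x_1\cdots x_{q+1}\,\beta$ contributes nothing outside $\JJ$ (so that both $\JJ_\pp$ and the class $\{d\omega\}$ are as stated) and that nonvanishing of the residues confines $\sing(\omega)$ near $\pp$ to the reduced codimension-$(q+1)$ locus; it is precisely here, through the residue bookkeeping of \cref{rem1} and the inductive mechanism of \cref{propKupka}, that the hypothesis $\lambda_I\neq\lambda_J$ enters. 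An alternative, fully inductive route would replace this local computation by descending to the residue $R(\omega)$ along a component $D_m$ chosen so that two residues of $R(\omega)$ still differ --- possible since $|K|\ge3$ once $q\ge2$ --- and invoking the codimension-one statement of \cite{fmi,mmq} as the base case.
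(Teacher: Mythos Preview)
Your argument is essentially correct but takes a genuinely different route from the paper's. The paper does \emph{not} compute $\KK_\pp$ and then reduce to \cref{coro1}; it argues directly from the definition of $\II$ in \cref{defII}. Writing the de Medeiros normal form with $K=(x_1,\dots,x_{q+1})$,
\[
\omega=\sum_{i=1}^{q+1} x_i f_i\bigwedge_{j\neq i}dx_j+\xi_2,\qquad \xi_2\in K^2\,\Omega^q_{X,\pp},
\]
the paper contracts against $\Xi_{i,k}=\bigwedge_{j\neq i,k}\partial/\partial x_j$ to obtain generators $\varpi_{i,k}=x_i f_i\,dx_k-x_k f_k\,dx_i+i_{\Xi_{i,k}}\xi_2$ of $\EE_\pp$, observes that $\EE_\pp\subset K\,\Omega^1_{X,\pp}$, and computes
\[
d\varpi_{i,k}\equiv\bigl(f_i(0)-f_k(0)\bigr)\,dx_i\wedge dx_k\pmod{K}.
\]
For any $h\in\II_\pp$ the defining relation gives $h\,d\varpi_{i,k}\in\EE_\pp\cdot\Omega^1_{X,\pp}\subset K\,\Omega^2_{X,\pp}$; choosing $i,k$ with $f_i(0)\neq f_k(0)$ then forces $h\in K$. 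This is precisely where the hypothesis $\lambda_I\neq\lambda_J$ enters, and it is the \emph{only} place.

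By contrast, your argument never actually uses $\lambda_I\neq\lambda_J$: you obtain $c\neq 0$ from the strictly-Kupka assumption alone, and the radicality of $\KK_\pp$ together with \cref{coro1} and \cref{JJcIIcKK2} does the rest. Two caveats follow. First, your claim $\JJ_\pp=(x_1,\dots,x_{q+1})$ needs all $q+1$ local residues $\lambda_{I_k}$ to be nonzero, which is a different (and not weaker) hypothesis than ``two differ''. Second, \cref{coro1} was itself established under the standing genericity hypotheses of the section, so you are really proving \cref{prop1} only in that regime. The paper's argument is self-contained and makes the role of the stated hypothesis transparent; yours is shorter once \cref{coro1} is in hand, but blurs what is actually required and leaves the reader wondering why $\lambda_I\neq\lambda_J$ was assumed at all.
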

\begin{proof}
	Being $\pp$ a Kupka point, following \cite[Proposition 1.3.1]{deMed1}, we can find local coordinates $(x_1,\dots, x_n)$ in which $\omega$ is written completely in terms of $(x_1,\dots, x_{q+1})$. In other words there is a local submersion $\pi:U\to \CC^{q+1}$ and a local form $\alpha\in \Omega^q_{\CC^{q+1},0}$ such that $\omega|_U=\pi^*\alpha$. We can take $U$ small enough in such a way that $0\in\CC^{q+1}$ is the only singularity of $\alpha$. Moreover, $\alpha$ also defines a logarithmic foliation in $\CC^{q+1}$ and its isolated singularity is the normal crossing intersection of $q+1$ of its invariant divisors.
	So we have local coordinates around $\pp$ such that the logarithmic form $\frac{\omega}{F}$ is written as
	\[
		\frac{\omega}{F} = \sum_{i=1}^{q+1} f_i \bigwedge_{j\neq i} \frac{dx_j}{x_j} + \xi_1.
	\]
	Where in this coordinates $F=x_1\cdots x_{q+1}$ and where $\xi_1\in W^{q-1}\Omega^q_X(\log D)$, the space of germs of logarithmic $q$-forms of weight at most $q-1$ as defined in \cite[3.5]{deligne}.
	Therefore we have in this coordinates the expression
	\[
		\omega = \sum_{i=1}^{q+1} x_i f_i \bigwedge_{j\neq i}dx_j +\xi_2.
	\]
	In this case $\xi_2 = F\cdot \xi_1$ is the germ of a holomorphic $q$-form which is actually in $(x_1,\dots, x_{q+1})^2\Omega_{X,\pp}^q= K^2\cdot \Omega_{X,p}^q$.

	Now lets take $\Xi_{i,k} = \bigwedge_{j\neq i,k}\frac{\partial}{\partial x_j}\in \bigwedge^{q-1}T_{X,\pp}$ and $\varpi_{i,k} = i_{\Xi_{i,k}} \omega$, then
	\[
		\varpi_{i,k} = x_if_i dx_k - x_k f_k dx_i + i_{\Xi_{i,k}}\xi_2.
	\]
	Notice that the $\varpi_{i,k}$ with $1\leq i<k\leq q+1$ generate the sheaf $\EE(\omega)$, as $i_{\frac{\partial}{\partial x_j}} \omega = 0$ for $q+2\leq j\leq n$.
	In particular every 1-form in $\EE$ is in $K\cdot \Omega^1_{X,0}$.
	We also have
	\[
		d\varpi_{i,k} = (f_i-f_k) dx_i\wedge dx_k + \xi_3,
	\]
	where $\xi_3 = d\xi_2$ is the germ of a $2$-form in $K\cdot\Omega^2_{X,0}$.

	Now suppose $h\in \II_\pp$, we want to show $h=0$ in $\OO_{X,\pp}/K_\pp$. As $h\in \II_\pp$ we have that, for every $\varpi\in\EE$,
	\[
		hd\varpi \in K\cdot \Omega^2_{X,\pp}.
	\]
	In particular we may take  $\varpi_{i,k}$ and get
	\[
		0\equiv hd\varpi_{i,k} \equiv h (f_i(0)-f_k(0)) dx_i\wedge dx_k \mod K.
	\]
	If we have two different residues in $\omega$ then we may take $i,k$ such that $f_i(0)-f_k(0)\neq 0$ (because the residues are exactly $f_i(0)$ for $i=1,\dots, q+1$). Hence we must have $h\equiv 0 \mod K$.
\end{proof}

By using the two propositions above we can conclude the following.

\begin{theorem}\label{teo1}
 Let $\omega$ be a codimension-$q$, locally decomposable, logarithmic foliation on $X$ such that $\kup$ is reduced and the singular locus be the disjoint union of $\kup$ and a variety $\HH$, \emph{i.e.}, $\sing(\omega) =\kup \sqcup \HH$. Then we have
 \[
  \kup= \per .
 \]
\end{theorem}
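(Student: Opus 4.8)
The plan is to realise $\per$ as a subscheme squeezed between $\sing(\omega)$ and $\kup$, and then to close the squeeze using that $\kup$ is reduced together with a local analysis at the Kupka points. First I would collect the inclusions already in hand: by \cref{JJcIIcKK2} and local decomposability we have $\JJ(\omega)\subseteq\II(\omega)\subseteq\KK(\omega)$, that is $\kup\subseteq\per\subseteq\sing(\omega)$; and by \cref{coro1} we have $(\per)_{\mathrm{red}}\subseteq(\kup)_{\mathrm{red}}$. Combining the first inclusion with \cref{coro1} gives $(\per)_{\mathrm{red}}=(\kup)_{\mathrm{red}}$, and since $\kup$ is reduced this reads $(\per)_{\mathrm{red}}=\kup$; equivalently $\sqrt{\II(\omega)}=\KK(\omega)$ and $\per$ is set-theoretically supported on $\kup$. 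So the whole content of the theorem is that $\II(\omega)$ is already radical, and since $\per$ is supported on $\kup$ it suffices to verify $\II_\pp=\KK_\pp$ at every closed point $\pp\in\kup$, the inclusion $\II_\pp\subseteq\KK_\pp$ being available.

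Next I would exploit that $\sing(\omega)=\kup\sqcup\LL$ is a disjoint union: a small neighbourhood of $\pp\in\kup$ meets $\sing(\omega)$ only along $\kup$, so $\sqrt{\JJ_\pp}=\KK_\pp$, and the chain $\JJ_\pp\subseteq\II_\pp\subseteq\KK_\pp$ reduces everything to bounding the possible non-reducedness of $\sing(\omega)$ along $\kup$. On the dense open set of strictly Kupka points this is immediate: by \cref{propKupka} such points are the complement in $\kup$ of the locus where $q+2$ or more components of $D$ meet, which has codimension at least one in $\kup$, and there the Kupka normal form \cite[Proposition~1.3.1]{deMed1} used in the proof of \cref{prop1} writes $\omega$ locally as the pull-back by a submersion of a logarithmic $q$-form $\alpha$ on $\CC^{q+1}$ whose singular ideal at the origin is the maximal ideal — its generators being the $x_if_i$ with $f_i(0)=\lambda_I\neq0$, which have linearly independent linear parts — so that $\JJ_\pp=\KK_\pp$ and hence $\II_\pp=\KK_\pp$. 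Equivalently, one quotes \cref{prop1} to get $\II_\pp\subseteq\KK_\pp$ directly and concludes by the same squeeze.

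It remains to treat the thin closed set $Z\subset\kup$ through whose points at least $q+2$ components of $D$ pass; here $\sing(\omega)$ may be genuinely non-reduced and the crude squeeze is too weak, and I expect this to be the main obstacle. The plan is an induction on $q$, with the case $q=1$ as base case. For $q\geq2$ and $\pp\in Z$, choose a component $D_j$ of $D$ through $\pp$ and pass to the residue $\eta=R(\varpi)$, which by the corollary following \cref{lemma2} is again a locally decomposable logarithmic foliation, now a $(q-1)$-logarithmic foliation on $D_j$, and whose Kupka scheme is, by \cref{propKupka}, the union of the $(q+1)$-fold intersections of components of $D$ containing $D_j$; in particular $\pp\in\kup(\eta)$. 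The inductive hypothesis gives $\kup(\eta)=\per(\eta)$ near $\pp$, and by \cref{lemma3} (the residue commutes with the first-order unfoldings of \cref{lemma11}) together with the linearity of the residue map, this equality should transport back to $\II_\pp=\KK_\pp$ for $\omega$. The delicate point — and where \cref{prop1}, \cref{lemma2} and \cref{lemma3} really earn their keep — is to make this transport precise at the level of the ideals $\II$ and $\KK$, and not merely at the level of their supports.
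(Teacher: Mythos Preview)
Your setup matches the paper's: the chain $\kup\subseteq\per\subseteq\sing(\omega)$ from \cref{JJcIIcKK2}, combined with $(\per)_{\mathrm{red}}\subseteq(\kup)_{\mathrm{red}}=\kup$ from \cref{coro1}, reduces the problem to showing $\II_\pp=\KK_\pp$ at every $\pp\in\kup$. But you then read the hypothesis ``$\sing(\omega)=\kup\sqcup\LL$'' only set-theoretically, extracting merely $\sqrt{\JJ_\pp}=\KK_\pp$, and this forces you to prove by hand that $\sing(\omega)$ is reduced along $\kup$: first at strictly Kupka points (where your computation that the singular ideal of the normal form $\alpha$ is the maximal ideal, hence $\JJ_\pp=\KK_\pp$, is correct), and then on the deeper strata via an induction on $q$ through residues that you yourself flag as incomplete.

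The paper instead reads the disjoint union scheme-theoretically: the hypothesis says directly that $\sing(\omega)_\pp=\kup_\pp$ for every $\pp\in\mathrm{Supp}(\kup)$, so the sandwich
\[
\kup_\pp\subseteq\per_\pp\subseteq\sing(\omega)_\pp=\kup_\pp
\]
closes immediately --- no case analysis, no induction, no distinction between strictly Kupka points and the rest. (The paper also invokes \cref{prop1} and density of strictly Kupka points to obtain $\kup\subseteq\per$, but this is already contained in \cref{JJcIIcKK2} under local decomposability, as you correctly used.) Your more elaborate program would be relevant if one wished to \emph{verify} the scheme-theoretic decomposition rather than assume it --- arguably a genuine issue when applying \cref{teo1} inside \cref{teo2} --- but as a proof of \cref{teo1} as stated it is unnecessary work, and its hardest step (transporting the equality of ideals through the residue map at the deep strata) remains only a sketch.
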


\begin{proof}
 By \cref{coro1} we have that $(\per)_{red}\subset(\kup)_{red}$, then by our hypothesis we have that $(\per)_{red}\subset \kup$. And by \cref{prop1} we know that for every $\pp\in \kup$ a strictly Kupka point we have $\II_\pp\subset\KK_\pp$, meaning that $\kup_\pp\subset \per_\pp$.

 Since $\kup$ is reduced then the set of strictly Kupka points is dense in $\kup$, implying that
 \[
  \kup\subset\per\ .
 \]
By \cref{JJcIIcKK2} we have that $\per\subset \sing(\omega)$ then $\kup\subset\per\subset\sing(\omega)$. And since $\sing(\omega)=\kup\sqcup\HH$ we have that
\[
 \kup_\pp\subset\per_\pp\subset \sing(\omega)_\pp=\kup_\pp\ , \forall \pp\in Supp(\kup)
\]
implying that $\KK(\omega)=\II(\omega)$.
\end{proof}

As a corollary of the theorem above we get the following decomposition of $\sing(\omega)$ for $\omega$ a $q$-logarithmic foliation.

\begin{theorem}\label{teo2}
Let $\omega\in H^0(X,\Omega^q_X\otimes \LL)$ be a generic, locally decomposable, $q$-logarithmic foliation with $q\leq n-2$. Then we can decompose its singular locus $\sing(\omega)$ as the disjoint union of
\[
 \sing(\omega) = \kup\sqcup \HH,
\]
where $\kup$ is the Kupka variety of $\varpi$ of codim $q+1$ and $\HH$ is a variety of dimension $q-1$ or $\HH=\emptyset$. Even more so, we have that $\kup=\per$.
\end{theorem}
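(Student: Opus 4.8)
The plan is to deduce \cref{teo2} from \cref{teo1} by verifying that a generic, locally decomposable $q$-logarithmic foliation $\omega$ satisfies the two hypotheses of that theorem, namely that $\kup$ is reduced and that $\sing(\omega)$ decomposes as a disjoint union $\kup\sqcup\HH$ with $\HH$ of dimension $q-1$ or empty. Once these are in place, \cref{teo1} immediately yields $\kup=\per$, and the dimension and codimension statements are read off directly from the explicit descriptions already established.

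First I would invoke \cref{propKupka}: under the genericity hypothesis the coefficients $\lambda_I$ are pairwise distinct, so $\kup = \bigcup_{K\subset[s],\,|K|=q+1}\bigl(\bigcap_{i\in K}D_i\bigr)$. Since the $D_i$ are smooth and form a simple normal crossing divisor, each $(q{+}1)$-fold intersection $\bigcap_{i\in K}D_i$ is smooth of codimension $q+1$ (or empty), hence reduced; a finite union of reduced subschemes meeting transversally along lower-dimensional strata is reduced, so $\kup$ is reduced and of codimension $q+1$, which also identifies it with the Kupka variety of $\varpi$ as stated.

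Next I would analyze the complement. We apply \cref{propDimZomega}: since $\varpi$ is locally decomposable with all residues nonzero, its zero locus $Z(\varpi)$ has dimension exactly $q-1$ (or is empty). Now $\sing(\omega)=\sing(F\varpi)$ splits according to whether a point lies on $D$ or not: away from $D$, $\omega$ and $\varpi$ define the same foliation up to a unit, so $\sing(\omega)\setminus D = Z(\varpi)\setminus D$, a set of dimension at most $q-1$; on $D$, by \cref{logfol1} each $D_i$ is invariant, and a local analysis using \cref{q-logfol} shows that on the smooth part of a single component $D_i$ (away from the other components) the form $\omega|_{D_i}$ is, up to a unit, the $q$-logarithmic foliation on $D_i$ with residue $R(\varpi)$, whose singular set — again by induction and \cref{propDimZomega}, \cref{propKupka} applied on $D_i$ — consists of the Kupka part $\bigcup_{K\ni i,\,|K|=q+1}\bigcap_{j\in K}D_j$ together with a piece of dimension $q-1$. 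Collecting these contributions, $\sing(\omega)$ is the union of $\kup$ (codimension $q+1$) with a residual set $\HH$ of dimension $\le q-1$; genericity (ampleness, distinct $\lambda_I$) guarantees these two loci are disjoint, since a point of $\HH$ lying on $\kup$ would force a coincidence of residues ruled out by \cref{prop1}'s hypothesis. This gives $\sing(\omega)=\kup\sqcup\HH$ with $\dim\HH=q-1$ or $\HH=\emptyset$.

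Finally, with $\kup$ reduced and $\sing(\omega)=\kup\sqcup\HH$, the hypotheses of \cref{teo1} are met (with $\LL$ there playing the role of $\HH$), so $\kup=\per$, completing the proof. The main obstacle I anticipate is the careful bookkeeping in the inductive step on $D$: one must control the singular locus of the restricted foliation $\omega|_{D_i}$ near the deepest strata (where many components of $D$ meet), show that the non-Kupka contribution there genuinely has dimension $\le q-1$ and does not accidentally intersect $\kup$, and reconcile the scheme structures so that the union is honestly disjoint rather than merely set-theoretically so — this is where the genericity assumptions on ampleness and on the $\lambda_I$ do the real work.
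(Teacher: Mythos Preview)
Your proposal is correct and follows essentially the same route as the paper: establish the decomposition $\sing(\omega)=\kup\sqcup\HH$ via \cref{propKupka} and \cref{propDimZomega}, check that $\kup$ is reduced, and then apply \cref{teo1}. The paper's own proof is considerably terser---it simply asserts the decomposition and the disjointness ``since $\omega$ is generic'' without the inductive analysis on $D$ you sketch---so your version supplies more of the underlying justification; one minor point is that your appeal to \cref{prop1} for disjointness is not quite the right citation (that proposition concerns the inclusion $\II_\pp\subset\KK_\pp$ at strictly Kupka points, not disjointness of $\HH$ from $\kup$), and in fact the paper handles this step by a bare genericity assertion rather than a precise argument.
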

\begin{proof}
Since $\omega$ is a $q$-logarithmic foliation, we know that its singular locus is made of $\kup$ of codimension-$(q+1)$ and  a variety $\HH$ of dimension $(q-1)$ or $\HH=\emptyset$. Since $\omega$ is generic, we can assume that $\kup\cap \HH=\emptyset$.

Finally, we can apply  \cref{teo1} and conclude $\kup=\per$ as we wanted to see.
\end{proof}

Let $X$ be a smooth projective variety with an ample line bundle $\LL$. We will denote $\bigoplus_{n\in \ZZ}H^0(X,\FF\otimes \LL^n)$ by $H^0_*(X,\FF)$.

\begin{corollary}\label{corofinal}
	Let $X$ a smooth projective variety with an ample line bundle and $\omega$ a $q$-logarithmic foliation defined by a simple normal crossing divisor $D=(\prod_{i=1}^s f_i)$ as
	\[
        \omega = F\left(\sum_{I\subset [s],\ |I|=q} \lambda_I \frac{df_I}{f_I}\right) = \sum_{I\subset [s],\ |I|=q} \lambda_I F_{\widehat{I}} df_I= F\varpi\in H^0(X,\Omega^q_X\otimes\LL)
    \]
	satisfying the hypotheses of \cref{teo2}, then we have
	\[
		 H^0_*(X,\II(\omega)) = H^0_*(X,\KK(\omega)) =\sum_{J\subset [s],\ |J|=q}\mathcal{I}\left(F_{\widehat{J}}\right) = \bigcap_{\substack{K\subset [s]\\ |K|=q+1}} \sum_{i\in K}\mathcal{I}\left( f_{i}\right).
	\]
\end{corollary}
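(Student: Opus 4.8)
The plan is to prove the chain of equalities one link at a time. The first equality, $H^0_*(X,\II(\omega))=H^0_*(X,\KK(\omega))$, is immediate from \cref{teo2}: its conclusion $\kup=\per$ says precisely that the ideal sheaves $\KK(\omega)$ and $\II(\omega)$ coincide, and applying $H^0_*$ to equal sheaves yields equal graded modules. The last equality $\sum_{|J|=q}\mathcal{I}(F_{\widehat{J}})=\bigcap_{|K|=q+1}\sum_{i\in K}\mathcal{I}(f_i)$ is exactly \cref{lemma1}, applied in the section ring $H^0_*(X,\OO_X)$; its hypothesis $ht(f_{i_1},\dots,f_{i_{s-q}})=s-q$ holds because $D=\sum_i D_i$ is simple normal crossing, so on each affine chart any $s-q$ distinct $f_i$ form part of a regular system of parameters (and the corresponding zero locus is the cone over $\bigcap D_{i_j}$, of codimension $s-q$). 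Thus the whole statement reduces to the middle equality $H^0_*(X,\KK(\omega))=\sum_{|J|=q}\mathcal{I}(F_{\widehat{J}})$.

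For that I would first pin down $\KK(\omega)$ as an ideal sheaf. Under the hypotheses of \cref{teo2} the Kupka scheme is reduced, and by \cref{propKupka} it is the variety $\bigcup_{|K|=q+1}\bigcap_{i\in K}D_i$; hence $\KK(\omega)$ is the radical ideal sheaf cutting out that union. Since the $D_i$ meet transversally, each $\bigcap_{i\in K}D_i$ is smooth with radical ideal sheaf $\sum_{i\in K}\mathcal{I}(f_i)$, and the radical ideal sheaf of a union is the intersection of the radical ideal sheaves, so $\KK(\omega)=\bigcap_{|K|=q+1}\sum_{i\in K}\mathcal{I}(f_i)$ as ideal sheaves. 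A local check then identifies the same sheaf with the one generated by the global sections $\{F_{\widehat{J}}:|J|=q\}$: near a point lying on exactly the components $D_{i_1},\dots,D_{i_r}$ (the remaining $f_j$ being units there) the germs of the $F_{\widehat{J}}$ generate, up to units, the unit ideal when $r\le q$ and the ideal of $(r-q)$-fold products of $f_{i_1},\dots,f_{i_r}$ when $r\ge q+1$, which by \cref{lemma1} applied to the regular sequence $f_{i_1},\dots,f_{i_r}$ equals $\bigcap_{K\subseteq\{i_1,\dots,i_r\},\,|K|=q+1}\sum_{i\in K}\mathcal{I}(f_i)$. So $\KK(\omega)$ is the ideal sheaf generated by $\{F_{\widehat{J}}:|J|=q\}$.

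It then remains to pass from this ideal sheaf to the graded ideal $\sum_{|J|=q}\mathcal{I}(F_{\widehat{J}})\subseteq H^0_*(X,\OO_X)$, i.e. to show $H^0_*(X,\KK(\omega))=\sum_{|J|=q}\mathcal{I}(F_{\widehat{J}})$. The inclusion $\supseteq$ is clear since the $F_{\widehat{J}}$ are honest global sections. The reverse inclusion is the step I expect to be the real obstacle: one must know that the graded ideal $\sum_{|J|=q}\mathcal{I}(F_{\widehat{J}})=\bigcap_{|K|=q+1}\sum_{i\in K}\mathcal{I}(f_i)$ is saturated, so that it does not grow when one sheafifies and takes $H^0_*$ again. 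This is where ampleness of $\LL$ genuinely enters: each $\sum_{i\in K}\mathcal{I}(f_i)$ is generated by a regular sequence of sections of powers of $\LL$, so its Koszul resolution together with Serre vanishing shows the associated complete intersection is arithmetically Cohen--Macaulay, hence its homogeneous ideal is saturated; and a finite intersection of saturated ideals is saturated. Combining this with the sheaf-level identification of the previous paragraph gives the middle equality and finishes the proof. (Equivalently, one may run \cref{lemma1} directly in $H^0_*(X,\OO_X)$, leaving only the single saturation statement $\bigcap_{|K|=q+1}\sum_{i\in K}\mathcal{I}(f_i)=H^0_*(X,\KK(\omega))$ to be checked.)
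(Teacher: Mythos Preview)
Your proof is correct and follows essentially the same route as the paper: use \cref{teo2} for $\II(\omega)=\KK(\omega)$, use \cref{propKupka} to identify $\KK(\omega)$ with $\bigcap_{|K|=q+1}\sum_{i\in K}\mathcal{I}(f_i)$, and use \cref{lemma1} for the remaining equality. Your treatment is in fact more careful than the paper's one-line proof, which simply asserts that \cref{propKupka} gives the Kupka \emph{ideal} and does not address the passage from the ideal sheaf to $H^0_*$; your discussion of reducedness and saturation fills exactly those gaps.
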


\begin{proof}
For $\omega$ of the form given in the statement the Kupka scheme is, by \cref{propKupka}, given by the ideal $\bigcap_{\substack{\{K\subset [s], |K|=q+1\}}} \sum_{i\in K}\mathcal{I}\left( f_{i}\right)$. Then applying \cref{teo2} and \cref{lemma1} we are done.
\end{proof}

%

\

\

\begin{tabular}{l l}
Ariel Molinuevo$^*$  &\textsf{arielmolinuevo@gmail.com}\\
Federico Quallbrunn$^\dag$  &\textsf{fquallb@dm.uba.ar}\\
\end{tabular}

\

\noindent
\small{
\begin{tabular}{l l}
& \\
$^*$Instituto de Matem\'atica & $^\dag$ Departamento de Matem\'atica\\
Universidade Federal do Rio de Janeiro & Universidad CAECE\\
Av. Athos da Silveira Ramos 149 & Av. de Mayo 866 \\
Bloco C, Centro de Tecnologia, UFRJ & CP C1084AAQ{ Buenos Aires} \\
Cidade Universitária, Ilha do Fund\~ao & {Argentina} \\
CEP. 21945-970 Rio de Janeiro - RJ & \\
Brasil & \\

\end{tabular}

}

\end{document}